\numberwithin{equation}{section}
\newtheorem{theorem}{Theorem}[section]
\newtheorem{proposition}[theorem]{Proposition}
\newtheorem{lemma}[theorem]{Lemma}
\newtheorem{remark}[theorem]{Remark}
\newcommand{\abs}[1]{{\left\lvert#1\right\rvert}}
\newcommand{\nabs}[1]{{\lvert#1\rvert}}
\newcommand{\br}[1]{\ensuremath{\left(#1\right)}}
\DeclareMathOperator{\dist}{dist}
\DeclareMathOperator{\identity}{id}
\DeclareMathOperator{\cof}{cof}
\newcommand{\norm}[1]{\ensuremath{\left\lVert#1\right\rVert}}
\newcommand{\sett}[2]{\ensuremath{\left\{#1\,\middle|\,#2\right\}}}
\newcommand{\sq}[1]{\ensuremath{\left[#1\right]}}
\renewcommand{\d}{\ensuremath{\,\mathrm{d}}}
\newcommand{\eps}{\ensuremath{\varepsilon}}
\newcommand{\N}{\ensuremath{\mathbb{N}}}
\newcommand{\R}{\ensuremath{\mathbb{R}}}
\newcommand{\Ecal}{\ensuremath{\mathcal E}}
\newcommand{\Dcal}{\ensuremath{\mathcal D}}
\newcommand{\Dcalt}{\ensuremath{\widetilde{\mathcal D}}}
\title{Nonlinear elasticity with vanishing nonlocal self-repulsion}
\author{Stefan Kr\"omer}
\address[Stefan Kr\"omer]{The Czech Academy of Sciences, Institute of Information Theory and Automation (\'{U}TIA), Pod vod\'{a}renskou v\v{e}\v{z}\'{\i}~4, 182~08~Praha~8, Czech Republic}
\email{skroemer@utia.cas.cz}
\author{Philipp Reiter}
\address[Philipp Reiter]{Chemnitz University of Technology,
Faculty of Mathematics, 09107 Chemnitz, Germany}
\email{reiter@math.tu-chemnitz.de}
\begin{document}
\begin{abstract}
We prove that that for nonlinear elastic energies with 
strong enough energetic control of the outer distortion of 
admissible deformations, 
almost everywhere global invertibility as constraint can be obtained 
in the $\Gamma$-limit 
of the elastic energy with an added nonlocal self-repulsion term with asymptocially vanishing coefficient.
The self-repulsion term considered here formally coincides with a Sobolev--Slobodecki\u\i\ seminorm of the inverse deformation.
Variants near the boundary or on the surface of the domain are also studied.
\end{abstract}
\maketitle

\tableofcontents

\section{Introduction}\label{sec:intro}

A basic requirement for many models of deformable solids is that they should prevent interpenetration of mass.
In context of hyperelasticity, i.e., nonlinear elasticity fully determined by a stored elastic energy function (see, e.g., \cite{Ba77a,Cia88B} for an introduction), 
this is ensured by a strong local resistance to compression 
built into the energy density, which in particular prevents local
change of orientation, combined with a constraint preventing global self-penetration, usually
the Ciarlet--Ne\v{c}as condition \cite{CiaNe87a}, see \eqref{eq:CNC} below.

In this article, we study the approximation of the latter by augmenting 
the local elastic energy with a nonlocal functional with self-repulsive properties, formally corresponding to suitable Sobolev--Slobodecki\u\i\ seminorms of the inverse deformation.
While all results presented here are purely analytical, our motivation is mainly numerical, related to the fact that the Ciarlet--Ne\v{c}as condition
is hard to handle numerically in such a way that the 
algorithm maintains an acceptable computational cost while still provably converging.
In particular, there is still no known projection onto the Ciarlet--Ne\v{c}as condition
which is rigorous with acceptable computational cost, see \cite{AigLi2013a} 
for some partial results.
There is a well-known straightforward penalty term that rigorously reproduces the Ciarlet--Ne\v{c}as condition in the limit (see \cite{MieRou16a}, e.g.), but it is hard to implement, non-smooth and computationally very expensive as a double integral on the full domain. Recent results on more practical rigorous approximation of the Ciarlet--Ne\v{c}as condition via nonlocal penalty terms added to the elastic energy were obtained in \cite{KroeVa19a} 
and \cite{KroeVa22Pa}, but these require additional regularity of elastic deformations
which possibly interferes with the Lavrentiev phenomenon which is known to appear at least in particular nonlinear elastic models~\cite{FoHruMi03a}. 

Using the language of $\Gamma$-convergence (see \cite{Brai02B}, e.g.), we show that in combination with local nonlinear elastic energies, the self-repulsive terms studied here also provide a rigorous approximation of the Ciarlet--Ne\v{c}as condition without requiring regularity of deformations beyond what is naturally provided by the nonlinear elastic energy (Theorem~\ref{thm:main}).
In addition, these admit natural variants near or on the boundary (Theorems~\ref{thm:main2} and~\ref{thm:main3}), which are significantly cheaper to compute in practice. The latter crucially rely on a global
invertibility property of orientation preserving maps exploiting topological information on the boundary \cite{Kroe20a} 
(for related results see also \cite{Ba81a,HeMoCoOl21a}).

Our results here still do not cover the full range of hyperelastic energies which are known to be variationally well-posed, though. In fact, we require lower bounds on the energy density which are strong enough so that deformation maps with finite elastic energy are
automatically continuous, open and discrete, the latter two by the theory of functions of bounded distortion \cite{HeKo14B}.
In our proofs, this is essential so that all local regularity is controlled by the elastic energy, while the nonlocal self-repulsive term asymptotically only controls global self-contact.

For related results concerning self-avoiding curves and surfaces in more geometrical context with higher regularity, we refer to
\cite{BaReiRie18a,BaRei20a,BaRei21a,YuBraSchuKee21a} and references therein.

\subsection*{General assumptions}

Let $\Omega\subset\R^{d}$ be a bounded Lipschitz domain,
$d\ge2$, $p\in(d,\infty)$, $r>0$, $\eps\ge0$,
$q\ge 1$ and $s\geq 0$.
By $W_{+}^{1,p}(\Omega,\R^{d})$ we denote the set of all functions
$y\in W^{1,p}(\Omega,\R^{d})$ with $\det \nabla y(x)>0$ for a.e.\ $x\in\Omega$.

We consider an integral functional modeling the internal elastic energy of a 
deformation $y\in W_{+}^{1,p}(\Omega,\R^{d})$ of a nonlinear hyperelastic solid. For simplicity,
we restrict ourselves to the following ``neo-Hookian'' form given by
\[ \Ecal(y) = \int_{\Omega}\abs{\nabla y(x)}^{p}\d x + \int_{\Omega}\frac{\d x}{\br{\det \nabla y(x)}^{r}}. \]

We are interested in precluding deformations
corresponding to self-interpenetration of matter,
i.e., non-injective $y$.
Classically, the latter is imposed by adding the
\emph{Ciarlet--Ne\v cas condition} \cite{CiaNe87a} as a constraint:
\begin{align}\label{eq:CNC}\tag{CN}
 \int_{\Omega}\abs{\det \nabla y(x)}\d x= \abs{y(\Omega)}.
\end{align}
\begin{remark}\label{rem:CNC}
By the area formula, the inequality \textup{``$\ge$''} in~\eqref{eq:CNC} always holds true and \eqref{eq:CNC} is equivalent to a.e.~injectivity of $y$ provided that $\det\nabla y>0$ a.e. As the latter is usually given, \eqref{eq:CNC} can also be expressed in the more standard form
 \begin{align*}
  \int_{\Omega}\det \nabla y(x)\d x\le \abs{y(\Omega)}.
 \end{align*}
\end{remark}

As a step towards a possible (numerical) approximation of \eqref{eq:CNC},
we regularize $\Ecal$
by adding
a singular nonlocal contribution $\Dcal$.
Below, we will show that  \eqref{eq:CNC} automatically holds whenever $\Ecal(y)<\infty$ and $\Dcal(y)<\infty$ with various examples for~$\Dcal$, see Propositions~\ref{prop:CNC}, \ref{prop:CNC2} and \ref{prop:CNC3}.

The first such example for $\Dcal$ is given by
\[ 
\Dcal_{U}(y) = \iint_{U\times U}\frac{\abs{x-\tilde x}^{q}}{\abs{y(x)-y(\tilde x)}^{d
+sq}}\abs{\det \nabla y(x)}\abs{\det \nabla y(\tilde x)}
\d x\d\tilde x \]
where $U\subset\Omega$ is some open neighborhood of~$\partial\Omega$ in~$\Omega$
and suitable parameters $q\in[1,\infty)$, $s\in[0,1)$.
In particular, we can choose $U=\Omega$.
Transforming the integral and invoking~\cite[Prop.~2]{MR1942116} reveals that the integral is singular if $s\ge1$.

Formally, after a change of variables,
$\Dcal_{U}$ is the Sobolev--Slobodecki\u\i\
semi-norm of $y^{-1}$ in the space $W^{s,q}(y(U),\R^{d})$. 
As long as $sq\ge 0$, the functional $\Dcal_{U}$ effectively prevents self-interpenetration, i.e., a loss of injectivity of $y$, as shown in Proposition~\ref{prop:CNC} below.
To the best of our knowledge, variants of $\Dcal_{U}$ for curves, with such a purpose in mind, first appeared in a master thesis~\cite{unseld} supervised by Dziuk, and have subsequently been studied in another master thesis~\cite{hermes}.
The functional~$\Dcal_{U}$ can be interpreted to be a sort of ``relaxation'' of the bi-Lipschitz constant.
In this sense it is a rather weak quantity in comparison with similar concepts that have been introduced earlier~\cite{MR1098918,MR1692638}.

We also study the boundary variant
\[
\Dcalt_{\partial\Omega}(y) = \iint_{\partial\Omega\times\partial\Omega}\frac{\abs{x-\tilde x}^{q}}{\abs{y(x)-y(\tilde x)}^{d-1
+sq}}
\d A(x)\d A(\tilde x) \]
where $A$ denotes the $(d-1)$-dimensional Hausdorff measure.

We give a rigorous statement of this approximation by
establishing $\Gamma$-convergence which is the main result of this paper.
To this end, we consider
for $y\in W^{1,p}(\Omega,\R^{d})$, $\eps>0$, 
\[ E_{\eps} (y) = \begin{cases}
\Ecal(y) + \eps\Dcal(y) & \text{if }y\in W_{+}^{1,p}(\Omega,\R^{d}), \\
+\infty &\text{else}.
\end{cases}
 \]
We reserve the symbol $E_{0}$ for the $\Gamma$-limit
which will turn out to be
\[ E_{0}(y) :=
\begin{cases}
\Ecal(y) & \text{if $y\in W_{+}^{1,p}(\Omega,\R^{d})$ 
satisfies~\eqref{eq:CNC},} \\
+\infty  & \text{else.}
\end{cases} \]

\section{Preliminary results}\label{sec:pre}

Change of variables in quite general form is important for us throughout, in form of 
the following special case of the area formula due to Marcus and Mizel.
\begin{lemma}[cf.~{\cite[Theorem 2]{MaMi73a}}]\label{lem:transform}
 Let $y\in W^{1,p}(\Lambda,\R^{d})$ with $p>d$, where $\Lambda$ is a bounded domain in $\R^{d}$.
 Moreover, assume that $f:\R^{d}\to \R$ is
 measurable and $E\subset\Lambda$ is measurable.
 We use the convention that $f(y(x))\abs{\det\nabla y(x)}=0$
 whenever $\abs{\det\nabla y(z)}=0$ for some $z\in E$
 and abbreviate $N_{y}(z,E)=\#(y^{-1}(z)\cap E)$ for any $z\in\R^{d}$,
where $\#$ denotes the counting measure.
 Then, if one of the functions $z\mapsto f(y(x))\abs{\det\nabla y(x)}$
 and $z\mapsto u(z)N_{f}(z,E)$ is integrable, so is the other
 one and the identity
 \begin{equation*}
 \int_{E} f(y(x))\abs{\det\nabla y(x)}\d x = \int_{\R^{m}} f(z)N_{f}(z,E) \d z
 \end{equation*}
holds. 
\end{lemma}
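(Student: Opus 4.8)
The plan is to reduce the statement to the classical area formula for Lipschitz maps; in fact it is essentially \cite[Theorem~2]{MaMi73a}, so below I only indicate a self-contained route. Splitting $f$ into its positive and negative parts and approximating a nonnegative $f$ by simple functions, it suffices to establish the identity as an equality in $[0,\infty]$ for $f\ge 0$; the ``one integrable $\Rightarrow$ both integrable'' assertion and the signed case then follow by monotone convergence, applying the nonnegative identity to $\abs f$ and to $f^{\pm}$.

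The two inputs specific to the range $p>d$ are: first, by Morrey's embedding $y$ has a representative that is continuous and differentiable at a.e.\ $x\in\Lambda$ with classical differential $\nabla y(x)$; and second, $y$ has Lusin's property~(N), i.e.\ $\abs N=0$ implies $\abs{y(N)}=0$ for measurable $N\subset\Lambda$. I would derive the latter from the Morrey oscillation estimate $\operatorname{osc}_{B_\rho}y\le C\rho^{1-d/p}\norm{\nabla y}_{L^p(B_\rho)}$ on balls $B_\rho$ contained in $\Lambda$, via a Vitali covering argument. Using a.e.\ differentiability I then decompose $\Lambda=Z\cup\bigcup_{j\in\N}\Lambda_j$ with $\abs Z=0$ and each $y|_{\Lambda_j}$ Lipschitz, and replace $\Lambda_j$ by $\Lambda_j\setminus\bigcup_{i<j}\Lambda_i$ so that the $\Lambda_j$ become pairwise disjoint.

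For each $j$ I extend $y|_{\Lambda_j}$ to a globally Lipschitz map $g_j\colon\R^d\to\R^d$ and apply the classical area formula to $g_j$: for any measurable $E\subset\Lambda$,
\begin{equation*}
\int_{E\cap\Lambda_j}f(g_j(x))\abs{\det\nabla g_j(x)}\d x=\int_{\R^d}f(z)\,\#\bigl(g_j^{-1}(z)\cap E\cap\Lambda_j\bigr)\d z.
\end{equation*}
Since $g_j=y$ on $\Lambda_j$ and $\nabla g_j=\nabla y$ a.e.\ on $\Lambda_j$ (both agree with the approximate differential at density points of $\Lambda_j$), the left-hand side equals $\int_{E\cap\Lambda_j}f(y(x))\abs{\det\nabla y(x)}\d x$ and the counting term equals $N_y(z,E\cap\Lambda_j)$. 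Summing over $j$ by Tonelli and using $N_y(z,E)=\sum_j N_y(z,E\cap\Lambda_j)$ for every $z\notin y(Z)$ --- hence for a.e.\ $z$, by property~(N) --- yields the asserted identity. The convention that the integrand vanishes on $\set{\det\nabla y=0}$ is consistent, since the $y$-image of that critical set is Lebesgue-null (apply the Lipschitz area formula to each $g_j$ restricted to it), so discarding the corresponding preimages alters $N_y(\cdot,E)$ only on a null set of~$z$.

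I expect the main obstacle to be exactly these two facts --- property~(N) and the nullity of the image of $\set{\det\nabla y=0}$ --- which together render the exceptional set $Z$ and the critical set invisible on the right-hand side; they are classical but rely on the quantitative Morrey estimate rather than on soft measure theory. By contrast, once the $\Lambda_j$ are disjoint, the multiplicity bookkeeping and the passage from Lipschitz pieces to the full identity are routine.
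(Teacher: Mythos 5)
The paper does not give its own proof of this lemma; it is stated as a direct citation to Marcus and Mizel \cite[Theorem~2]{MaMi73a}, so there is no in-text argument to compare against. Your proposal is a correct self-contained proof, and it follows what is in essence exactly the Marcus--Mizel route: Morrey embedding for continuity and a.e.\ classical differentiability, Lusin's property~(N) from the Morrey oscillation estimate, Federer-style decomposition of the domain (minus a null set) into countably many pieces on which $y$ is Lipschitz, Kirszbraun/McShane extension, the classical Lipschitz area formula piece by piece, and summation by Tonelli. The bookkeeping points you flag as potential obstacles --- property~(N) removing $y(Z)$ from the right-hand side, and the Lipschitz area formula applied to the critical set $\{\det\nabla y=0\}$ to show its image is null, so that the stated convention on the integrand is harmless --- are indeed exactly the nontrivial inputs, and you handle them correctly. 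One small remark: you invoke that $\nabla g_j=\nabla y$ a.e.\ on $\Lambda_j$ because both agree with the (approximate) differential at density points of $\Lambda_j$; this is true but deserves the half-sentence justification that $\Lambda_j$ having full density at such a point forces agreement of the differentials in every direction. Also note the statement as printed in the paper contains typos ($z\mapsto$ in place of $x\mapsto$, $u$ and $N_f$ in place of $f$ and $N_y$, $\R^m$ in place of $\R^d$, and the quantifier ``for some $z\in E$'' in the convention should read ``at $x$''); you have implicitly read them the intended way.
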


\begin{proposition}\label{prop:lscE}
For $p>d$ and $r>0$, the functional $\Ecal:W^{1,p}(\Omega;\R^d)\to [0,\infty]$ 
is
lower semicontinuous with repect to weak convergence in $W^{1,p}$.
\end{proposition}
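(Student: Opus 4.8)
\emph{Proof strategy.}
The plan is to decompose $\Ecal(y)=\Ecal_1(y)+\Ecal_2(y)$ into the two manifestly nonnegative pieces $\Ecal_1(y):=\int_\Omega\abs{\nabla y(x)}^p\d x$ and $\Ecal_2(y):=\int_\Omega\phi\br{\det\nabla y(x)}\d x$, where $\phi\colon\R\to[0,\infty]$ is defined by $\phi(t):=t^{-r}$ for $t>0$ and $\phi(t):=+\infty$ for $t\le0$, and to show that $\Ecal_1$ and $\Ecal_2$ are separately sequentially weakly lower semicontinuous on $W^{1,p}(\Omega;\R^d)$; since $\liminf_k a_k+\liminf_k b_k\le\liminf_k(a_k+b_k)$ for nonnegative real sequences, the sum $\Ecal$ is then weakly lower semicontinuous as well. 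Note that $\phi$ is convex and lower semicontinuous on $\R$: on $(0,\infty)$ one computes $\phi''(t)=r(r+1)t^{-r-2}>0$, and $\phi(t)\to+\infty=\phi(0)$ as $t\to0^+$, while $\phi\equiv+\infty$ on $(-\infty,0)$. In particular $\Ecal(y)=\Ecal_1(y)+\Ecal_2(y)$ for every $y\in W^{1,p}$ under this convention, and $F\mapsto\abs{F}^p+\phi(\det F)$ is a nonnegative, $[0,\infty]$-valued polyconvex integrand. For $\Ecal_1$ it suffices to note that $y\mapsto\int_\Omega\abs{\nabla y}^p\d x$ is convex and strongly continuous on $W^{1,p}$, hence sequentially weakly lower semicontinuous.

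The substance of the argument lies in $\Ecal_2$. Given $y_k\rightharpoonup y$ in $W^{1,p}(\Omega;\R^d)$, I would invoke the classical weak continuity of the Jacobian determinant for $p>d$ (see, e.g., \cite{Ba77a,Cia88B}) to get $\det\nabla y_k\rightharpoonup\det\nabla y$ in $L^{p/d}(\Omega)$, hence weakly in $L^1(\Omega)$ since $\Omega$ is bounded and $p/d>1$. It then remains to show that $g\mapsto\int_\Omega\phi(g(x))\d x$ is sequentially weakly lower semicontinuous on $L^1(\Omega)$ and to apply this with $g=\det\nabla y_k$. This functional is convex because $\phi$ is, and it is strongly lower semicontinuous on $L^1$: if $g_k\to g$ in $L^1$, pass to a subsequence realising the lower limit of $\int_\Omega\phi(g_k)$ and a further subsequence with $g_k\to g$ a.e., then apply Fatou's lemma using $\phi\ge0$ and the lower semicontinuity of $\phi$. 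A convex, strongly lower semicontinuous functional on the Banach space $L^1$ is sequentially weakly lower semicontinuous, since its sublevel sets are convex and strongly closed, hence weakly closed. Combining the bounds for $\Ecal_1$ and $\Ecal_2$ yields $\Ecal(y)\le\liminf_k\Ecal(y_k)$.

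The only delicate point is that the integrand takes the value $+\infty$ where $\det\nabla y\le0$, so one cannot directly quote a standard lower semicontinuity theorem for finite-valued integrands; this is circumvented precisely by the nonnegativity and lower semicontinuity of $\phi$ combined with the weak continuity of $\det\nabla y$, and no coercivity beyond $p>d$ is needed. As a byproduct, whenever $\liminf_k\Ecal(y_k)<\infty$ the weak limit satisfies $\Ecal_2(y)<\infty$, so $\det\nabla y>0$ a.e.\ and $y\in W_+^{1,p}(\Omega;\R^d)$ automatically; thus the positivity constraint encoded in $\Ecal$ is itself stable under weak limits of bounded-energy sequences.
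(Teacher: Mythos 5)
Your proof is correct and takes essentially the same approach as the paper: both arguments rest on polyconvexity of the integrand together with the weak continuity of $y\mapsto\det\nabla y$ as a map from $W^{1,p}$ to $L^{p/d}$. The paper simply cites Ball's lower-semicontinuity theorem for polyconvex integrands, whereas you rederive it in this special case by splitting $\Ecal$ into a convex functional of $\nabla y$ and a convex functional of $\det\nabla y$, using Fatou for strong lower semicontinuity and the fact that convex, strongly closed sublevel sets are weakly closed.
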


\begin{proof}
The integrand of $\Ecal$ is polyconvex, since $(F,J)\mapsto|F|^p+J^{-r}$, $\R^{d\times d}\times (0,\infty)\to [0,\infty],$ is convex. As shown in detail by Ball \cite{Ba77a}, sequential weak lower semicontinuity of $\Ecal$ therefore follows from the weak continuity of the determinant, i.e., 
$y\mapsto \det \nabla y$ as a map between $W^{1,p}(\Omega;\R^d)$ and $L^{p/d}(\Omega)$, where both spaces are endowed with their weak topologies. 
\end{proof}

The Ciarlet--Ne\v cas condition is a viable constraint for direct methods:

\begin{lemma}[Weak stability of~\eqref{eq:CNC} {\cite[p.~185]{CiaNe87a}}]
\label{lem:stable}
 Let $y_{k}\rightharpoonup y_{\infty}$ in $W_{+}^{1,p}(\Omega,\R^{n})$, $p>d$,
 and assume that~\eqref{eq:CNC} holds for all $y_{k}$, $k\in\N$.
 Then~\eqref{eq:CNC} applies to $y_{\infty}$ as well.
\end{lemma}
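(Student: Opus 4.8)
The plan is to follow the classical argument of Ciarlet and Ne\v{c}as, reducing \eqref{eq:CNC} to its inequality form from Remark~\ref{rem:CNC} and then combining weak continuity of the Jacobian with uniform convergence of the deformations. Since each $y_{k}$ and also $y_{\infty}$ lie in $W_{+}^{1,p}$, i.e., $\det\nabla y_{k}>0$ and $\det\nabla y_{\infty}>0$ a.e., and since the inequality ``$\ge$'' in \eqref{eq:CNC} is automatic by the area formula (Lemma~\ref{lem:transform} with $E=\Omega$ and $f\equiv 1$, using $N_{y}(z,\Omega)\ge 1$ on $y(\Omega)$), it suffices to prove
\[ \int_{\Omega}\det\nabla y_{\infty}(x)\d x\le\abs{y_{\infty}(\Omega)}, \]
given that $\int_{\Omega}\det\nabla y_{k}(x)\d x=\abs{y_{k}(\Omega)}$ for every $k$.

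First I would invoke the weak continuity of the determinant already used in the proof of Proposition~\ref{prop:lscE}: from $y_{k}\rightharpoonup y_{\infty}$ in $W^{1,p}$ one gets $\det\nabla y_{k}\rightharpoonup\det\nabla y_{\infty}$ in $L^{p/d}(\Omega)$, and since $p/d>1$ the constant $1$ is an admissible test function, so $\int_{\Omega}\det\nabla y_{k}\d x\to\int_{\Omega}\det\nabla y_{\infty}\d x$. Next, because $p>d$ and $\Omega$ is a bounded Lipschitz domain, the embedding $W^{1,p}(\Omega,\R^{d})\hookrightarrow C(\overline\Omega,\R^{d})$ is compact, hence $y_{k}\to y_{\infty}$ uniformly on $\overline\Omega$ (the uniform limit necessarily coinciding with the weak limit $y_{\infty}$).

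The geometric heart of the argument is an upper bound on the image volume in the limit. For every $\delta>0$ we have $y_{k}(\overline\Omega)\subset K_{\delta}:=\set{z\in\R^{d}:\dist(z,y_{\infty}(\overline\Omega))\le\delta}$ once $k$ is large, so $\abs{y_{k}(\Omega)}\le\abs{K_{\delta}}$. Letting $k\to\infty$ and then $\delta\downarrow 0$, and using continuity from above of Lebesgue measure on the bounded (hence finite-measure) sets $K_{\delta}$ together with $\bigcap_{\delta>0}K_{\delta}=y_{\infty}(\overline\Omega)$ (a compact set), we obtain $\limsup_{k}\abs{y_{k}(\Omega)}\le\abs{y_{\infty}(\overline\Omega)}$. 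It then remains to pass from $\overline\Omega$ back to $\Omega$: since $\partial\Omega$ is Lebesgue-null in $\R^{d}$ and, extending $y_{\infty}$ to a map in $W^{1,p}(\R^{d},\R^{d})$ (possible as $\Omega$ is Lipschitz), $y_{\infty}$ satisfies Luzin's condition (N) because $p>d$, we get $\abs{y_{\infty}(\partial\Omega)}=0$ and thus $\abs{y_{\infty}(\overline\Omega)}=\abs{y_{\infty}(\Omega)}$. Chaining the estimates yields $\int_{\Omega}\det\nabla y_{\infty}\d x=\lim_{k}\abs{y_{k}(\Omega)}\le\abs{y_{\infty}(\Omega)}$, i.e.\ \eqref{eq:CNC} for $y_{\infty}$.

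I expect the only genuine subtleties to be the bookkeeping around $\overline\Omega$ versus $\Omega$ in the image --- ensuring $\abs{y_{\infty}(\partial\Omega)}=0$ and that $y_{\infty}(\Omega)$ is measurable (it is an $F_{\sigma}$ set, being a countable union of images of compact sets exhausting $\Omega$) --- while the weak-continuity and compact-embedding inputs are entirely standard for $p>d$.
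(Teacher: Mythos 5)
Your proof is correct and is essentially the classical argument of Ciarlet--Ne\v{c}as (the paper cites it without reproducing a proof): pass to the inequality form of \eqref{eq:CNC}, use weak continuity of $y\mapsto\det\nabla y$ into $L^{p/d}$ to control $\int_\Omega\det\nabla y_\infty$, use the compact embedding $W^{1,p}\hookrightarrow C(\overline\Omega)$ for $p>d$ to trap $y_k(\overline\Omega)$ in shrinking neighborhoods of $y_\infty(\overline\Omega)$, and finally dispose of the boundary contribution via Lusin's condition (N) for a $W^{1,p}(\R^d)$ extension. All the bookkeeping you flag (measurability of $y_\infty(\Omega)$ as an $F_\sigma$, the $\overline\Omega$ vs.\ $\Omega$ passage) is handled correctly; in particular, you are right that the extension-plus-Lusin-(N) route is needed rather than a pure H\"older-dimension bound, since the latter would require $p>d^2$.
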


Using the theory of maps of bounded distortion, we can obtain even more. For sufficiently large $p$ and $r$,
deformations with finite energy are open and discrete (see below) due to a result of Villamor and Manfredi \cite{ViMa98a}. 
With added global topological information, say, in form of \eqref{eq:CNC}, finite energy maps are even necessarily homeomorphisms \cite[Section 3]{GraKruMaiSte19a} (see also \cite{Kroe20a} for related results). In summary, we have the following.

\begin{proposition}\label{prop:homeo}
 Let $d\geq 2$,
 \[ p>d(d-1)\quad(\ge d),\qquad r>\frac{p(d-1)}{p-d(d-1)}\quad(>d-1),
 \]
and let $y\in W_{+}^{1,p}(\Omega,\R^{d})$ such that $\Ecal(y)<\infty$.
Then 
the continuous representative of $y$ is open (i.e., $y$ maps open subset of $\Omega$ to open sets in $\R^{d}$) and discrete (i.e., for each $z\in\R^d$, $y^{-1}(\{z\})$ does not have accumulation points in~$\Omega$). In particular, $y(\Omega)$ is open in $\R^d$.
If, in addition, \eqref{eq:CNC} holds, then $y$ is a homeomorphism on~$\Omega$ and $y^{-1}\in W_{+}^{1,\sigma}(y(\Omega),\Omega)$,
where
\[ \sigma:=\frac{(r+1)p}{r(d-1)+p}>d. 
\]
\end{proposition}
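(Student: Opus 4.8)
The plan is to split Proposition~\ref{prop:homeo} into two parts. First, the openness and discreteness of the continuous representative of $y$ follow directly from the theory of mappings of bounded (outer) distortion, once we verify that the hypotheses on $p$ and $r$ guarantee finite outer distortion with the correct integrability. Concretely, for $y \in W^{1,p}_+(\Omega,\R^d)$ the pointwise outer distortion is $K_O(x) = \abs{\nabla y(x)}^d / \det\nabla y(x)$, and I would estimate, using Young's/H\"older's inequality, $\int_\Omega K_O(x)^\alpha \d x$ for a suitable exponent $\alpha$ in terms of $\int_\Omega \abs{\nabla y}^p$ and $\int_\Omega (\det\nabla y)^{-r}$; the stated lower bounds $p > d(d-1)$ and $r > p(d-1)/(p-d(d-1))$ are precisely what is needed so that $K_O \in L^\alpha$ with $\alpha$ large enough (roughly $\alpha > d-1$) to invoke the Villamor--Manfredi result \cite{ViMa98a}, which then yields that the continuous representative is open and discrete. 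Openness immediately gives that $y(\Omega)$ is open in $\R^d$.

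Second, assuming in addition that \eqref{eq:CNC} holds, I would combine the openness and discreteness with \eqref{eq:CNC} to conclude that $y$ is a homeomorphism. Here the key input is the cited result from \cite[Section~3]{GraKruMaiSte19a} (or the analogous statement in \cite{Kroe20a}): an open, discrete, orientation-preserving Sobolev map satisfying the Ciarlet--Ne\v{c}as condition is necessarily injective, hence (being also open and continuous on a domain) a homeomorphism onto the open set $y(\Omega)$. The underlying mechanism is that for open discrete maps the topological degree is well-defined and the multiplicity function $N_y(z,\Omega)$ is a.e.\ constant on $y(\Omega)$ equal to this degree; \eqref{eq:CNC} together with the area formula (Lemma~\ref{lem:transform} with $f = \mathbf 1_{y(\Omega)}$) forces $N_y(\cdot,\Omega) = 1$ a.e., and for an open discrete map this upgrades to genuine injectivity.

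Finally, for the regularity of the inverse, I would use that $y^{-1}$ exists as a map $y(\Omega)\to\Omega$ and identify its weak derivative via the chain rule: at a.e.\ point, $\nabla(y^{-1})(z) = (\nabla y(y^{-1}(z)))^{-1} = \cof \nabla y(y^{-1}(z))^{T} / \det\nabla y(y^{-1}(z))$. Changing variables with Lemma~\ref{lem:transform} (now injectivity makes $N_y \equiv 1$ on $y(\Omega)$), one gets
\[
\int_{y(\Omega)} \abs{\nabla(y^{-1})(z)}^\sigma \d z
= \int_{\Omega} \frac{\abs{\cof \nabla y(x)}^\sigma}{(\det\nabla y(x))^\sigma} \det\nabla y(x) \d x
= \int_{\Omega} \frac{\abs{\cof \nabla y(x)}^\sigma}{(\det\nabla y(x))^{\sigma-1}} \d x .
\]
Since $\abs{\cof \nabla y} \le C \abs{\nabla y}^{d-1}$, a H\"older estimate with the two energy terms of $\Ecal$ shows this is finite precisely when $\sigma$ is chosen so that $\sigma(d-1) \cdot \tfrac{p}{p - \sigma(d-1)} $ balances against the exponent $r$ on the Jacobian, which yields the claimed value $\sigma = (r+1)p / (r(d-1)+p)$; a short computation using $r > p(d-1)/(p-d(d-1))$ confirms $\sigma > d$. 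That $y^{-1} \in W^{1,\sigma}$ rather than merely having an $L^\sigma$ gradient follows from the standard fact that the inverse of a Sobolev homeomorphism of finite distortion with integrable inverse-gradient lies in the corresponding Sobolev space. The main obstacle is the bookkeeping in the first step: getting the exact arithmetic relating $p$, $r$, and the distortion exponent $\alpha$ right so that Villamor--Manfredi applies, and making sure the Jacobian may be assumed positive (not just nonnegative) in the relevant estimates — everything else is an assembly of cited results and a change of variables.
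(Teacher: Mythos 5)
Your proposal follows the same route as the paper's proof: outer distortion $K^O = |\nabla y|^d/\det\nabla y$ controlled in $L^\kappa$ with $\kappa>d-1$ via Young's inequality to invoke Villamor--Manfredi for openness/discreteness, then degree-theoretic global invertibility from \cite{Kroe20a} given \eqref{eq:CNC}, and finally a change of variables plus Young/H\"older on $|\cof\nabla y|^\sigma/(\det\nabla y)^{\sigma-1}\lesssim |\nabla y|^{(d-1)\sigma}(\det\nabla y)^{-(\sigma-1)}$ to pin down $\sigma=(r+1)p/(r(d-1)+p)$. The arithmetic you sketch matches, and the positivity of $\det\nabla y$ a.e.\ that you flag is already built into $W^{1,p}_+$.
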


\begin{remark}\label{rem:homeo}
Possible self-contact on $\partial\Omega$ is not ruled out, and so 
$y$ is not necessarily a homeomorphism on~$\overline\Omega$.
\end{remark}

\begin{proof}[Proof of Proposition~\ref{prop:homeo}]
With $F=\nabla y(x)$,
\[	
	K^O(F):=\frac{\abs{F}^d}{\det F}
\]
is the \emph{outer distortion} of $y$ at $x$ (or \emph{dilatation} in the terminology of \cite{ViMa98a}). 
We infer from Young's inequality for some $\kappa,\rho\in(1,\infty)$ that
\begin{align*}
  \abs{K^O(F)}^{\kappa} \leq C_{\rho}\left(\abs F^{d\kappa\rho} + {\br{\frac1{\det F}}^{\frac{\kappa\rho}{\rho-1}}}\right).
\end{align*}
Given $y\in W_{+}^{1,p}(\Omega,\R^{d})$,
we find that $K^O(\nabla y)\in L^{\kappa}(\Omega)$ provided
$\Ecal(y)<\infty$ as well as $d\kappa\rho=p$ and
$\kappa=r(1-\frac1\rho)$.
Since $\frac1\kappa=\frac1r+\frac dp<\frac{p-d(d-1)}{p(d-1)} + \frac dp = \frac1{d-1}$
and $\rho=1+\frac p{dr}>1$
we may conclude
that $y$ is open and discrete as shown by Villamor and Manfredi \cite[Theorem 1]{ViMa98a}.

Finally, \eqref{eq:CNC} implies that $y\in W_{+}^{1,p}$ is a map of (Brouwer's) degree $1$ for values of its image
($y\in \rm{DEG}1$ by \cite[Remark 2.19(b)]{Kroe20a}, e.g.). 
By \cite[Thm.~6.8]{Kroe20a}, it now follows that $y:\Omega\to y(\Omega)$ is a homeomorphism with weakly differentiable inverse, and 
$\nabla (y^{-1})\in L^d(y(\Omega);\R^{d\times d})$.

Now that
$y$ is invertible with weakly differentiable inverse,
we may improve the last conclusion to $L^{\sigma}$.
To this end, we first apply a change of variables and use 
$F^{-1}=\frac{\cof F}{\det F}$, whence $|F^{-1}|\leq c\abs{F}^{d-1}|\det F|^{-1}$.
Then the assertion follows again by invoking Young's inequality to bound $\abs{F}^{(d-1)\sigma}|\det F|^{-(\sigma-1)}$.
\end{proof}

The final two lemmas of this section will be crucial ingredients in the construction of a recovery sequence in the proof of Theorem~\ref{thm:main}. They are also used in \cite{KroeVa19a,KroeVa22Pa} in similar fashion. For a closely related result and further references we refer to \cite[Theorem 5.1]{BaZa17a}.
\begin{lemma}[domain shrinking]\label{lem:shrinking}
Let $\Omega\subset \R^d$ be a bounded Lipschitz domain. Then there exists a sequence of $C^\infty$-diffeomorphisms 
\[
    \Psi_j:\overline{\Omega}\to \Psi_j(\overline{\Omega})\subset\subset \Omega
\]
such that as $j\to\infty$, $\Psi_j\to \identity$ in $C^m(\overline{\Omega};\R^d)$ for all $m\in \N$.
\end{lemma}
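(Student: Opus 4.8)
The plan is to construct the diffeomorphisms $\Psi_j$ explicitly as a small inward push along a smooth vector field that points strictly into $\Omega$ everywhere on $\partial\Omega$, combined with a time-$t_j$ flow. First I would produce a vector field $V\in C^\infty(\R^d;\R^d)$ such that $V\cdot\nu<0$ on $\partial\Omega$ in a suitable generalized (a.e., or viscosity) sense; since $\Omega$ is only Lipschitz, the cleanest route is to take $V=-\nabla\varphi$ where $\varphi$ is a smooth function that is positive in $\Omega$, vanishes on $\partial\Omega$, and has $|\nabla\varphi|$ bounded below near $\partial\Omega$ --- for instance a mollified version of the distance function $x\mapsto\dist(x,\partial\Omega)$, or one obtains such a $\varphi$ from the standard fact that a bounded Lipschitz domain admits a defining function that is smooth up to the boundary after the usual bi-Lipschitz flattening and partition-of-unity argument. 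The key geometric input is that for a Lipschitz domain there is a single globally defined $C^\infty$ vector field transversal to $\partial\Omega$ pointing inward; this is where the Lipschitz (cone) condition is actually used.

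Next I would let $\Phi_t$ denote the flow of $V$, i.e. $\tfrac{d}{dt}\Phi_t(x)=V(\Phi_t(x))$, $\Phi_0=\identity$, which for $t\ge0$ is well defined and smooth on a neighborhood of $\overline\Omega$ (truncating $V$ outside a large ball to guarantee global existence in time). Then I would set $\Psi_j:=\Phi_{t_j}$ for a sequence $t_j\downarrow 0$. By construction $\Phi_{t_j}$ is a $C^\infty$-diffeomorphism onto its image, and since the transversality $V\cdot\nu<0$ makes $t\mapsto\dist(\Phi_t(x),\partial\Omega)$ increase strictly for small $t>0$ uniformly in $x\in\overline\Omega$ (a compactness argument on $\partial\Omega$ using the strict sign of $V\cdot\nu$, or directly $\tfrac{d}{dt}\varphi(\Phi_t(x))=|\nabla\varphi|^2>0$ near $\partial\Omega$), we get $\Psi_j(\overline\Omega)\subset\subset\Omega$ for every $j$. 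The convergence $\Psi_j\to\identity$ in $C^m(\overline\Omega;\R^d)$ for all $m$ follows from smooth dependence of the flow on $t$: $\Phi_t\to\Phi_0=\identity$ in $C^m$ as $t\to0$, with rate $O(t)$, uniformly on the compact set $\overline\Omega$.

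The main obstacle is purely the regularity of $\partial\Omega$: for a Lipschitz domain the outer normal $\nu$ exists only $\mathcal H^{d-1}$-a.e. and has no continuity, so "transversal inward vector field" must be set up carefully --- the honest way is to work with the smoothed distance function $\varphi$, for which $|\nabla\varphi|\ge c>0$ on a one-sided neighborhood $\{0<\varphi<\delta\}\cap\Omega$ is the nontrivial fact (it is essentially the statement that a Lipschitz domain is locally the subgraph of a Lipschitz function, so $\dist(\cdot,\partial\Omega)$ is comparable to the vertical distance and its mollification keeps a nonvanishing gradient). Once $\varphi$ with this property is in hand, everything else is a routine ODE-flow estimate. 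An alternative, avoiding flows entirely, is to cover $\partial\Omega$ by finitely many charts in which $\Omega$ is a Lipschitz subgraph, push down by a small amount $\eta_j$ in each chart, and glue with a smooth partition of unity; this also works but makes the "diffeomorphism" and "$C^m$-convergence" bookkeeping more tedious, so I would prefer the global-vector-field version.
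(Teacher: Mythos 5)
Your primary route (flow for a short time along a smooth, strictly inward vector field) is genuinely different from the paper's construction, which uses explicit \emph{affine} local pushdowns in Lipschitz subgraph coordinates, glued by a smooth partition of unity --- exactly the ``alternative'' you sketch at the end and dismiss as more tedious. Both routes are sound; here is how they compare and where yours needs a fix.

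\textbf{Sign issue.} With $\varphi$ a smoothed distance function (positive in $\Omega$, zero on $\partial\Omega$), $\nabla\varphi$ points \emph{inward} along $\partial\Omega$. The inward field is therefore $V=+\nabla\varphi$, not $V=-\nabla\varphi$; indeed your own computation $\tfrac{d}{dt}\varphi(\Phi_t)=\nabla\varphi\cdot V=|\nabla\varphi|^2$ is only correct with the $+$ sign, and with $V=-\nabla\varphi$ you would get $-|\nabla\varphi|^2<0$, i.e.\ the flow would exit $\Omega$. This is an easy fix but as written the construction goes the wrong way.

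\textbf{The ``nontrivial fact.''} You are right to flag that $|\nabla\varphi|\ge c>0$ near $\partial\Omega$ for a mollified distance function is the substantive step. It does hold: in each local subgraph chart $\Omega\cap Q=\{x\cdot e< f(x')\}$ with Lipschitz constant $L$, the direction from a point to its nearest boundary point makes angle at most $\arctan L$ with $e$, so $\nabla\dist\cdot(-e)\geq(1+L^2)^{-1/2}$ a.e.\ in $\Omega\cap Q$; this one-sided lower bound on a fixed directional derivative survives convolution, giving $\nabla\varphi\cdot(-e)\ge c>0$ on $\Omega\cap Q$ near $\partial\Omega$, hence $|\nabla\varphi|\ge c$. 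Compactness of $\partial\Omega$ then gives a uniform $c$. This is where the Lipschitz hypothesis is used, as you say.

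\textbf{Comparison.} The paper's proof is more elementary: each local pushdown $\hat\Psi_j(\cdot;Q)=x'+\alpha_0 e+\tfrac{j-1}{j}(e\cdot x-\alpha_0)e$ is affine, so the $C^m$-convergence is immediate, and the only thing left to note (which the paper leaves implicit) is that $\Psi_j\to\identity$ in $C^1(\overline\Omega)$ forces $\Psi_j$ to be a diffeomorphism onto its image for $j$ large. Your flow version avoids the partition-of-unity bookkeeping by producing one global smooth formula, with diffeomorphism property and $C^m$-convergence coming for free from smooth dependence on $t$, but it imports the ODE machinery and the mollified-distance lemma above. Both buy the statement; neither is clearly shorter once the details are filled in.
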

\begin{lemma}[composition with domain shrinking is continuous]\label{lem:shrinkcont}
Let $\Omega\subset \R^d$ be a bounded Lipschitz domain, $k\in \N_0$, $1\leq p <\infty$ and $f\in W^{k,p}(\Omega;\R^m)$, $m\in \N$.
With the maps $\Psi_j$ of Lemma~\ref{lem:shrinking}, we then have that $f\circ \Psi_j\to f$ in $W^{k,p}(\Omega;\R^n)$.
\end{lemma}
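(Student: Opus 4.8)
The plan is to reduce everything to the classical fact that translation (or more generally, composition with maps converging to the identity) is continuous on $L^p$ and on $W^{k,p}$, using the density of smooth functions together with a uniform-in-$j$ operator bound. First I would fix a function $f \in W^{k,p}(\Omega;\R^m)$ and note that since $\Psi_j(\overline\Omega) \subset\subset \Omega$, the composition $f \circ \Psi_j$ is a well-defined element of $W^{k,p}(\Omega;\R^m)$: the chain rule applies because $\Psi_j$ is a $C^\infty$-diffeomorphism onto its image, and the change-of-variables formula controls all norms. Concretely, for a smooth $g$ the estimate $\|g\circ\Psi_j\|_{W^{k,p}(\Omega)} \le C\,\|\nabla\Psi_j\|_{C^{k-1}}^{k}\,\|(\det\nabla\Psi_j)^{-1}\|_{L^\infty(\Psi_j(\Omega))}^{1/p}\,\|g\|_{W^{k,p}(\Omega)}$ follows from Faà di Bruno applied to the derivatives of $g\circ\Psi_j$ and a substitution $z=\Psi_j(x)$ in each resulting integral; since $\Psi_j \to \identity$ in every $C^m(\overline\Omega;\R^d)$, the prefactors are bounded uniformly in $j$. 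This gives a uniform bound $\|g \circ \Psi_j\|_{W^{k,p}(\Omega)} \le C\,\|g\|_{W^{k,p}(\Omega)}$ with $C$ independent of $j$ and of $g$.

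Next I would prove the convergence first for smooth $g \in C^\infty(\overline\Omega;\R^m)$. For such $g$ each derivative $\partial^\alpha(g\circ\Psi_j)$, $|\alpha|\le k$, is by the chain rule a finite sum of terms of the form $(\partial^\beta g)\circ\Psi_j$ multiplied by polynomial expressions in the derivatives $\partial^\gamma \Psi_j$ with $|\gamma|\le k$; as $j\to\infty$ these polynomial factors converge uniformly on $\overline\Omega$ to the corresponding factors for the identity map (which are $0$ except for the term $\partial^\alpha g\circ\identity$), and $(\partial^\beta g)\circ\Psi_j \to \partial^\beta g$ uniformly on $\overline\Omega$ by uniform continuity of $\partial^\beta g$ together with $\Psi_j\to\identity$ uniformly. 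Hence $\partial^\alpha(g\circ\Psi_j) \to \partial^\alpha g$ uniformly on $\overline\Omega$, and since $\Omega$ is bounded this yields $g\circ\Psi_j \to g$ in $W^{k,p}(\Omega;\R^m)$.

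Finally I would pass to general $f$ by a standard $3\eps$ argument. Given $\eps>0$, choose $g \in C^\infty(\overline\Omega;\R^m)$ (or $C^\infty(\R^d)$ restricted, using that $\Omega$ is a Lipschitz domain so that $W^{k,p}(\Omega)$ admits smooth approximation) with $\|f-g\|_{W^{k,p}(\Omega)} < \eps$. Then
\[
  \|f\circ\Psi_j - f\|_{W^{k,p}} \le \|(f-g)\circ\Psi_j\|_{W^{k,p}} + \|g\circ\Psi_j - g\|_{W^{k,p}} + \|g-f\|_{W^{k,p}} \le (C+1)\eps + \|g\circ\Psi_j-g\|_{W^{k,p}},
\]
and the middle term tends to $0$ by the smooth case. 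Letting $j\to\infty$ and then $\eps\to 0$ completes the proof. The main obstacle is the uniform operator bound in the first step: one must verify that the change-of-variables factors $\|\nabla\Psi_j\|_{C^{k-1}}$ and $\|(\det\nabla\Psi_j)^{-1}\|_{L^\infty}$ stay bounded, which is exactly where the strong $C^m$-convergence $\Psi_j\to\identity$ from Lemma~\ref{lem:shrinking} is used — in particular $\det\nabla\Psi_j \to 1$ uniformly guarantees $\det\nabla\Psi_j$ is bounded away from $0$ for large $j$. Everything else is bookkeeping with the chain rule and a routine density argument.
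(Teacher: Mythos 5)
Your proof is correct and rests on the same core strategy as the paper's: approximate $f$ by smooth functions, prove the convergence for smooth data, and pass to the limit using a uniform-in-$j$ bound on the composition operator, with the $C^m$-convergence $\Psi_j\to\identity$ from Lemma~\ref{lem:shrinking} supplying both the uniform bound (via $\det\nabla\Psi_j$ bounded away from $0$) and the pointwise convergence of the chain-rule factors. The organization differs, though. The paper only carries out the cases $k=0,1$ explicitly and relegates $k\geq 2$ to an induction. For $k=1$ it splits $\partial_n(f\circ\Psi_j)-\partial_n f$ by the chain rule into two pieces: a factor-convergence term $[(\nabla f)\circ\Psi_j](\partial_n\Psi_j - e_n)$, which is handled directly with no density argument (using only a uniform $L^p$ bound on $(\nabla f)\circ\Psi_j$ via change of variables and the uniform convergence $\partial_n\Psi_j\to e_n$), plus the pure shift term $(\partial_n f)\circ\Psi_j - \partial_n f$, which is the $k=0$ case applied to $\partial_n f$ and is where the extension-and-mollification density argument is actually invoked. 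You instead establish a uniform $W^{k,p}\to W^{k,p}$ operator bound on $g\mapsto g\circ\Psi_j$ for arbitrary $k$ and then run a single $3\eps$ argument at the level of $W^{k,p}$. Your version is cleaner and handles all $k$ at once; the paper's is more incremental, needing density only at the $L^p$ level and building higher derivatives by a direct chain-rule decomposition. Both are valid; there is no gap in your argument beyond the (easily fixed) cosmetic imprecision in the displayed operator-norm constant.
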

\begin{proof}[Proof of Lemma~\ref{lem:shrinking}]
If $\Omega$ is strictly star-shaped with respect to a point $x_0\in\Omega$, one may take 
$\Psi_j(x):=x_0+\frac{j-1}{j}(x-x_0)$. For a general Lipschitz domain, one can combine local constructions near the boundary using a smooth decomposition of unity: If, locally in some open cube $Q$, the set $\Omega$ is given as a Lipschitz subgraph, i.e.,
$\Omega\cap Q=\{x\in Q\mid x\cdot e\leq f(x')\}$ and $\partial\Omega\cap Q=\{x'+ef(x')\mid x\in Q\}$,
where $e$ is a unit vector orthogonal to one of the faces of $Q$, $x':=x-(x\cdot e) e$ and $f$ is a Lipschitz function,
we define
\[
    \hat{\Psi}_j(x;Q):=x'+\alpha_0 e+\frac{j-1}{j}(e\cdot x-\alpha_0)e\quad\text{for $x\in Q$, with $\alpha_0:=\inf_{x\in Q}e\cdot x$}.
\]
Notice that $\hat{\Psi}_j(\cdot;Q)$ pulls the local boundary piece $\partial\Omega\cap Q$ "down" (in direction $-e$) into the original domain while leaving the "lower" face of $Q$ fixed. Since $\partial\Omega$ can be covered by finitely many such cubes, we can write $\overline{\Omega}\subset Q_0\cup \bigcup_{k=1}^{n} Q_k$ with some open interior set $Q_0\subset\subset \Omega$. For a smooth decomposition of unity $1=\sum_{k=0}^n \varphi_k$ subordinate to this covering of $\Omega$ (i.e., $\varphi_k$ smooth, non-negative and compactly supported in $Q_k$), 
\[
    \Psi_j(x):=\varphi_0(x)x+\sum_{k=1}^n \varphi_j(x) \hat{\Psi}_j(x;Q_k)
\]
now has the asserted properties.
\end{proof}
\begin{proof}[Proof of Lemma~\ref{lem:shrinkcont}]
We only provide a proof for the case $k=1$, which will include the argument for $k=0$. For $k\geq 2$, the assertion follows inductively.
It suffices to show that as $j\to\infty$, $\partial_n [f\circ \Psi_j-f] \to 0$ in $L^p$, 
for each partial derivative $\partial_n$, $n=1,\ldots, d$. By the chain rule, 
\begin{align}\label{lemshrinkdef-1}
\begin{aligned}
	&\partial_n [f\circ \Psi_j-f]=
	[(\nabla f)\circ \Psi_j] \partial_n \Psi_j -\partial_n f \\
	&\quad =\Big([(\nabla f)\circ \Psi_j] \partial_n \Psi_j-(\partial_n f)\circ \Psi_j \Big)
	+\Big((\partial_n f)\circ \Psi_j -\partial_n f\Big).
\end{aligned}
\end{align}
The first term above converges to zero in $L^p$ since 
$(\nabla f) e_n=\partial_n f$ for the $n$-th unit vector $e_n$, and
$\partial_n\Psi_j \to \partial_n\identity=e_n$ uniformly. The convergence of the second term corresponds to our assertion for the case $k=0$, with 
$\tilde{f}:=\partial_n f\in L^p$. It can be proved in the same way as the well-known continuity of the shift in $L^p$:
If $\tilde{f}$ is smooth and can be extended to a smooth function on $\R^d$, we have
\begin{align}\label{lemshrinkdef-2}
	\norm{\tilde{f}\circ \Psi_j -\tilde{f}}_{L^p(\Omega;\R^m)}\leq \norm{\nabla\tilde{f}}_{L^\infty(\R^d;\R^{m\times d})} \norm{\Psi_j-\identity}_{L^p(\Omega;\R^d)}\underset{j\to\infty}{\longrightarrow} 0.
\end{align}
The general case follows by approximation of $\tilde{f}$ in $L^p$ with such smooth functions, by first extending 
$\tilde{f}$ by zero to all of $\R^d$, and then mollifying.
Here, notice that for the mollified function, $\|\nabla\tilde{f}\|_{L^\infty}$ in \eqref{lemshrinkdef-2} is unbounded in general as a function of the mollification parameter, but one can always choose the latter to converge slow enough with respect to $j$ so that \eqref{lemshrinkdef-2} still holds.
\end{proof}

\section{Elasticity with vanishing nonlocal self-repulsion}\label{sec:main}

In this section, we will study energies of the form
\[ E_{\eps} (y) = \begin{cases}
\Ecal(y) + \eps \Dcal(y) & \text{if }y\in W_{+}^{1,p}(\Omega,\R^{d}), \\
+\infty &\text{else},
\end{cases}
\]
in the limit $\eps\to 0^+$, in the sense of $\Gamma$-convergence with respect to the weak topology of $W^{1,p}$. 
Here, we say that $E_{\eps}$ $\Gamma$-converges to a functional $E_0$ if the following two properties hold
for every sequence $\eps(k)\to 0^+$ and every
$y\in W^{1,p}(\Omega;\R^d)$:
\begin{itemize}
\item[(i)] (lower bound) For all sequences $y_k\rightharpoonup y$ weakly in $W^{1,p}$, 
\[ \liminf_k E_{\eps(k)}(y_k)\geq E_0(y). \]
\item[(ii)] (recovery sequence) 
There exists a sequence $y_k\rightharpoonup y$ weakly in $W^{1,p}$ s.t. 
\[ \limsup_k E_{\eps(k)}(y_k)\leq E_0(y). \]
\end{itemize}
\begin{remark}
Notice that we do not require \emph{compactness} here, i.e., that any sequence $(y_k)$ with 
bounded $E_{\eps(k)}(y_k)$ has a subsequence weakly converging in $W^{1,p}$. This is automatic as soon as bounded energy implies a bound in the norm of $W^{1,p}$. However, in the most basic form, the energies we study are translation invariant and only control $\nabla y$ but not $y$. 
Of course, this would change as soon as a Poincar\'e inequality can be used due to a suitable boundary condition or other controls on $y$ or its average added via constraint or additional energy terms of lower order. 
\end{remark}

We will discuss three different examples for $\Dcal$, each preventing self-interpenetration, i.e., a loss of injectivity of $y$, in a different way.
Recall that
\[ E_{0}(y) =
\begin{cases}
\Ecal(y) & \text{if $y\in W_{+}^{1,p}(\Omega,\R^{d})$ satisfies~\eqref{eq:CNC},} \\
+\infty  & \text{else.}
\end{cases} \]

Throughout this section, we assume that
$d,p,r,\sigma$
satisfy the assumptions of Proposition~\ref{prop:homeo}, namely
\begin{equation}\label{eq:dprs}
 d\geq 2,\quad
 p>d(d-1),\quad
 r>\frac{p(d-1)}{p-d(d-1)}, \quad
 \sigma=\frac{(r+1)p}{r(d-1)+p}.
\end{equation}

\subsection{Bulk self-repulsion}\label{ssec:mainbulk}

Here we consider the energy $E_\eps$ with $\Dcal := \Dcal_{\Omega}$, i.e.,
\[ \Dcal(y) = \Dcal_{\Omega}(y) = \iint_{\Omega\times\Omega}\frac{\abs{x-\tilde x}^{q}}{\abs{y(x)-y(\tilde x)}^{d
+sq}}\abs{\det \nabla y(x)}\abs{\det \nabla y(\tilde x)}
\d x\d\tilde x \]
for $y\in W^{1,p}(\Omega,\R^{d})$, $q\in[1,\infty)$, $s\in[0,1)$.

The following statement is actually not required for our
main result.
However, together with its counterpart for the elastic energy (cf.~Proposition~\ref{prop:lscE})
it ensures well-posedness of the variational model.

\begin{proposition}\label{prop:lscD1}
For $p>d$, the functional
$\Dcal_\Omega:W^{1,p}(\Omega;\R^d)\to [0,\infty]$ 
is
lower semicontinuous with respect to weak convergence in $W^{1,p}$.
\end{proposition}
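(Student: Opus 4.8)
The plan is to show that $\Dcal_\Omega$ is weakly lower semicontinuous on $W^{1,p}$, $p>d$, by combining pointwise a.e.\ convergence of the integrand along a weakly convergent (sub)sequence with Fatou's lemma. So suppose $y_k\rightharpoonup y$ in $W^{1,p}(\Omega;\R^d)$; by Sobolev embedding (since $p>d$) we have $y_k\to y$ uniformly on $\overline\Omega$ after passing to the continuous representatives, and $\det\nabla y_k\rightharpoonup\det\nabla y$ in $L^{p/d}(\Omega)$ by the weak continuity of the determinant (as already invoked in the proof of Proposition~\ref{prop:lscE}). The first step is to pass to a subsequence (not relabelled) along which $\liminf_k\Dcal_\Omega(y_k)$ is attained as a genuine limit; it then suffices to bound $\Dcal_\Omega(y)$ from above by this limit.

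The second step is to get a handle on the product $\abs{\det\nabla y_k(x)}\abs{\det\nabla y_k(\tilde x)}$ in the double integral. Here I would use that weak convergence in $L^{p/d}(\Omega)$ of $\det\nabla y_k$ implies weak convergence in $L^{p/d}(\Omega\times\Omega)$ of the tensor product $(x,\tilde x)\mapsto\det\nabla y_k(x)\det\nabla y_k(\tilde x)$ to $\det\nabla y(x)\det\nabla y(\tilde x)$ — this follows by testing against tensor-product test functions, which are dense, together with the uniform $L^{p/d}(\Omega\times\Omega)$ bound coming from $\sup_k\norm{\det\nabla y_k}_{L^{p/d}(\Omega)}^2<\infty$. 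Meanwhile the kernel $K_k(x,\tilde x):=\abs{x-\tilde x}^q\abs{y_k(x)-y_k(\tilde x)}^{-(d+sq)}$ converges to $K(x,\tilde x):=\abs{x-\tilde x}^q\abs{y(x)-y(\tilde x)}^{-(d+sq)}$ pointwise a.e.\ on $\Omega\times\Omega$: away from the diagonal this is immediate from uniform convergence $y_k\to y$ together with the fact that $y$, being injective whenever $\Dcal_\Omega(y)<\infty$, has $y(x)\neq y(\tilde x)$ for a.e.\ pair $(x,\tilde x)$ with $x\neq\tilde x$; and if $\Dcal_\Omega(y)=\infty$ there is nothing to prove, so one may assume $\Dcal_\Omega(y)<\infty$.

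The third step is to combine these. Since the integrand is nonnegative, I would argue by a truncation: for $M>0$ let $K^M:=\min\{K,M\}$, which is bounded and (for fixed $M$) the truncated kernels $K_k^M\to K^M$ pointwise a.e.\ and are uniformly bounded by $M$, hence $K_k^M\to K^M$ strongly in $L^{(p/d)'}(\Omega\times\Omega)$ by dominated convergence (using boundedness of $\Omega\times\Omega$). Pairing this strong convergence with the weak convergence of $\det\nabla y_k(x)\det\nabla y_k(\tilde x)$ in $L^{p/d}(\Omega\times\Omega)$ gives
\[
\iint_{\Omega\times\Omega}K^M\,\abs{\det\nabla y(x)}\abs{\det\nabla y(\tilde x)}\,\d x\,\d\tilde x
=\lim_k\iint_{\Omega\times\Omega}K_k^M\,\abs{\det\nabla y_k(x)}\abs{\det\nabla y_k(\tilde x)}\,\d x\,\d\tilde x,
\]
where on the right I dropped the sign by nonnegativity (the products of determinants are nonnegative since $y_k,y\in W_+^{1,p}$). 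The right-hand side is $\le\liminf_k\Dcal_\Omega(y_k)$ because $K_k^M\le K_k$. Letting $M\to\infty$ and applying the monotone convergence theorem on the left yields $\Dcal_\Omega(y)\le\liminf_k\Dcal_\Omega(y_k)$, as desired.

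The main obstacle I anticipate is the pairing of weak $\times$ strong convergence in the right function spaces on the product domain $\Omega\times\Omega$ — in particular verifying that $\det\nabla y_k\otimes\det\nabla y_k$ converges weakly in $L^{p/d}(\Omega\times\Omega)$, which is not quite a formal consequence of weak convergence on $\Omega$ and requires the density-of-tensor-products argument plus the uniform bound; and ensuring the a.e.\ nonvanishing of $y(x)-y(\tilde x)$ off the diagonal, for which one needs injectivity of $y$ a.e., legitimately available only when $\Dcal_\Omega(y)<\infty$ (the case one may assume). The truncation device is what makes the weak/strong pairing applicable despite the kernel's singularity, and I expect it to be the cleanest route; an alternative purely Fatou-based argument would additionally need an equi-integrability or Egorov-type step to upgrade pointwise a.e.\ convergence of $\det\nabla y_k$ along a further subsequence, which is available but less transparent.
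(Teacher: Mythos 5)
Your proof takes a genuinely different route from the paper's. The paper truncates in the denominator (replacing $\abs{y(x)-y(\tilde x)}^{d+sq}$ by $\max\{\delta,\cdot\}$), freezes the nonlocal weight via uniform convergence $y_k\to y$, and then invokes a weak lower semicontinuity result of Pedregal \cite{Pe16a} for double-integral functionals whose integrand is \emph{separately convex} in $(J,\tilde J)$ — the point being that $(J,\tilde J)\mapsto\abs{J}\,\nabs{\tilde J}$ is separately but not jointly convex. You instead truncate the kernel itself at level $M$, and couple the resulting strong $L^{(p/d)'}(\Omega\times\Omega)$ convergence of $K_k^M$ with a weak $L^{p/d}(\Omega\times\Omega)$ convergence of the tensorized determinants, which is a clean and elementary substitute for the separate-convexity lemma. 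Both proofs then let the truncation parameter go, via $\sup_\delta$ or monotone convergence. Your route has the merit of being self-contained (no appeal to \cite{Pe16a}), at the cost of some extra care on the product domain, which you correctly flag.

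There are, however, two concrete issues. First, and most substantively, you drop the absolute values by invoking $y_k,y\in W^{1,p}_+$, but the proposition is stated on all of $W^{1,p}(\Omega;\R^d)$. In general $\abs{\det\nabla y_k}\otimes\abs{\det\nabla y_k}$ does \emph{not} converge weakly to $\abs{\det\nabla y}\otimes\abs{\det\nabla y}$, because taking the absolute value is not weakly continuous; the paper sidesteps this precisely through separate convexity of $\abs J\nabs{\tilde J}$. Your argument can be patched: along a further subsequence $\abs{\det\nabla y_k}\rightharpoonup g$ in $L^{p/d}$ for some $g$, and from $\det\nabla y_k\rightharpoonup\det\nabla y$ together with $(\det\nabla y_k)^\pm\geq 0$ one gets $g\geq\abs{\det\nabla y}$ a.e.; then pair $K^M_k$ with $\abs{\det\nabla y_k}\otimes\abs{\det\nabla y_k}\rightharpoonup g\otimes g\geq \abs{\det\nabla y}\otimes\abs{\det\nabla y}$ and conclude as before. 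As written, though, the proof establishes lower semicontinuity only on $W^{1,p}_+$. Second, the reduction ``if $\Dcal_\Omega(y)=\infty$ there is nothing to prove'' is the wrong direction: for lower semicontinuity one may assume $\liminf_k\Dcal_\Omega(y_k)<\infty$, not $\Dcal_\Omega(y)<\infty$. Fortunately this reduction, and with it the appeal to a.e.~injectivity of $y$ (which is in any case not an obvious consequence of $\Dcal_\Omega(y)<\infty$, since the determinants in the integrand may vanish), is unnecessary: $K_k\to K$ pointwise a.e.\ in $[0,\infty]$ for the full sequence regardless — if $y(x)\neq y(\tilde x)$ it follows from uniform convergence, and if $y(x)=y(\tilde x)$ with $x\neq\tilde x$ both sides are $+\infty$ — and therefore $K^M_k\to K^M$ a.e.\ and boundedly in all cases.
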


\begin{proof}
For $\delta>0$ define
\[
	\Dcal^{[\delta]}(y):=\iint_{\Omega\times\Omega}\frac{\abs{x-\tilde x}^{q}}{\max\{\delta,\abs{y(x)-y(\tilde x)}^{d+sq}\}}\abs{\det \nabla y(x)}\abs{\det \nabla y(\tilde x)}\,dx\,d\tilde x\leq \Dcal(y).
\]
Let $(y_k)\subset W^{1,p}(\Omega;\R^d)$ with $y_k\rightharpoonup y$ in $W^{1,p}$, for some 
$y\in W^{1,p}(\Omega;\R^d)$. In particular, $y_k\to y$ in $C(\overline\Omega;\R^d)$ by embedding, and consequently,
\begin{align}\label{wlscD-1}
	\liminf_{k\to\infty}\Dcal^{[\delta]}(y_k)=\liminf_{k\to\infty}
	\iint_{\Omega\times\Omega} W_y(x,\tilde x,\det \nabla y_k(x),\det \nabla y_k(\tilde x))\,dx\,d\tilde x
\end{align}
where 
\[
	W_y(x,\tilde x,J,\tilde{J}):=\frac{\abs{x-\tilde x}^{q}}{\max\{\delta,\abs{y(x)-y(\tilde x)}^{d+sq}\}}\abs{J}|\tilde{J}|
	\quad \text{for}~~x,\tilde x\in \Omega,~~J,\tilde{J}\in \R.
\]
Clearly, $W_y$ is symmetric in $(x,\tilde x)$ and $(J,\tilde{J})$, as well as separately convex in $(J,\tilde{J})$, i.e., convex in $J$ with $x,\tilde x,\tilde{J}$ fixed and convex in $\tilde{J}$ with $x,\tilde x,J$ fixed.
By \cite[Theorem 2.5]{Pe16a} (see also the related earlier result \cite[Theorem 11]{El11Pa}), this implies weak lower semicontinuity of $J\mapsto \iint_{\Omega\times\Omega} W_y(x,\tilde x,J(x),J(\tilde{x}))\,dx\,d\tilde x$ in $L^\alpha(\Omega)$, in particular for $\alpha:=\frac{p}{d}$. 
Again exploiting the weak continuity of the determinant, i.e., $J_k:=\det \nabla y_k\rightharpoonup J:=\det \nabla y$ weakly in $L^{p/d}$, we thus get that
\begin{align}\label{wlscD-2}
\begin{aligned}
  \liminf_{k\to\infty}\iint_{\Omega\times\Omega} W_y(x,\tilde x,\det \nabla y_k(x),\det \nabla y_k(\tilde x))\,dx\,d\tilde x & \\
	\quad \geq \iint_{\Omega\times\Omega} W_y(x,\tilde x,\det \nabla y(x),\det \nabla y(\tilde x))\,dx\,d\tilde x 
	&=	\Dcal^{[\delta]}(y).
\end{aligned}
\end{align}
Combining \eqref{wlscD-1} and \eqref{wlscD-2}, we see that $\Dcal^{[\delta]}$ is weakly lower semicontinuous for each $\delta>0$.
Since $\Dcal_\Omega(y)=\sup_{\delta>0} \Dcal^{[\delta]}(y)$, this implies weakly lower semicontinuity of~$\Dcal_\Omega$.
\end{proof}

\begin{theorem}\label{thm:main}
Let $\Omega\subset \R^d$, be a bounded Lipschitz domain, and suppose that
 $q\geq 1$ and $s\in[0,1)$. In addition, suppose that~\eqref{eq:dprs} holds together with
\begin{equation}\label{eq:sigma}
 s-\frac dq\le 1-\frac d\sigma.
\end{equation}
Then the functionals $E_{\eps}$ $\Gamma$-converge to $E_{0}$
 as $\eps\searrow 0$, with respect to the weak topology of $W^{1,p}(\Omega,\R^{d})$.
\end{theorem}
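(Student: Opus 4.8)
\emph{Overview.} I would verify the two defining properties of $\Gamma$-convergence separately. The liminf inequality is soft: it combines the lower semicontinuity of $\Ecal$, the fact that finite energy forces~\eqref{eq:CNC} for each member of the sequence, and the weak stability of~\eqref{eq:CNC}. The recovery sequence is the substantial part, and it is built by domain shrinking followed by a diagonal argument; this is where condition~\eqref{eq:sigma} enters.

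\emph{Lower bound.} Let $\eps(k)\to 0^+$ and $y_k\rightharpoonup y$ in $W^{1,p}$. We may assume $L:=\liminf_k E_{\eps(k)}(y_k)<\infty$ and, passing to a subsequence (not relabeled) realizing the liminf, that $E_{\eps(k)}(y_k)\le L+1<\infty$ for all $k$. Then each $y_k\in W_{+}^{1,p}(\Omega,\R^{d})$, $\Ecal(y_k)\le L+1$, and, since $\eps(k)>0$, $\Dcal_\Omega(y_k)<\infty$; by Proposition~\ref{prop:CNC} each $y_k$ satisfies~\eqref{eq:CNC}. Boundedness of $\Ecal(y_k)$ together with Proposition~\ref{prop:lscE} gives $\Ecal(y)\le\liminf_k\Ecal(y_k)<\infty$, in particular $y\in W_{+}^{1,p}(\Omega,\R^{d})$, and Lemma~\ref{lem:stable} yields~\eqref{eq:CNC} for $y$, so $E_0(y)=\Ecal(y)$. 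Since $\eps(k)\Dcal_\Omega(y_k)\ge 0$,
\[
 \liminf_k E_{\eps(k)}(y_k)\ \ge\ \liminf_k\Ecal(y_k)\ \ge\ \Ecal(y)\ =\ E_0(y).
\]

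\emph{Recovery sequence.} If $E_0(y)=+\infty$, take $y_k\equiv y$. Otherwise $y\in W_{+}^{1,p}(\Omega,\R^{d})$, $\Ecal(y)<\infty$, and~\eqref{eq:CNC} holds, so by Proposition~\ref{prop:homeo} $y:\Omega\to y(\Omega)$ is a homeomorphism onto the \emph{open} set $y(\Omega)$, with $y^{-1}\in W_{+}^{1,\sigma}(y(\Omega),\Omega)$, $\sigma>d$. Let $(\Psi_j)$ be the shrinking diffeomorphisms of Lemma~\ref{lem:shrinking} and set $\tilde y_j:=y\circ\Psi_j$. Then $\tilde y_j\in W_{+}^{1,p}(\Omega,\R^{d})$ (here $\det\nabla\tilde y_j=(\det\nabla y\circ\Psi_j)\det\nabla\Psi_j>0$ a.e.), and $\tilde y_j\to y$ strongly in $W^{1,p}$ by Lemma~\ref{lem:shrinkcont}. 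Moreover $\Ecal(\tilde y_j)\to\Ecal(y)$: the term $\int_\Omega\abs{\nabla\tilde y_j}^p$ converges by strong $W^{1,p}$-convergence, and for the determinant term one applies Lemma~\ref{lem:shrinkcont} with $k=0$ to $(\det\nabla y)^{-r}\in L^1(\Omega)$ together with $\det\nabla\Psi_j\to 1$ uniformly. The key claim is that $\Dcal_\Omega(\tilde y_j)<\infty$ for each fixed $j$. Since $\Psi_j(\overline\Omega)\subset\subset\Omega$ and $y$ is a homeomorphism, $K_j:=y(\Psi_j(\overline\Omega))$ is compact in the open set $y(\Omega)$, so I fix a bounded open set $W_j$ with smooth boundary, $K_j\subset W_j\subset\subset y(\Omega)$. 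As $y^{-1}\in W^{1,\sigma}(W_j,\R^d)$ and $W_j$ is a nice bounded domain, the Sobolev embedding $W^{1,\sigma}(W_j)\hookrightarrow W^{s,q}(W_j)$ holds precisely because of~\eqref{eq:sigma}, whence $[y^{-1}]_{W^{s,q}(W_j)}<\infty$. Now $\tilde y_j$ is a homeomorphism of $\Omega$ onto $\tilde y_j(\Omega)=y(\Psi_j(\Omega))\subset W_j$ with $(\tilde y_j)^{-1}=\Psi_j^{-1}\circ y^{-1}$; using Lemma~\ref{lem:transform} to change variables ($z=\tilde y_j(x)$, $\tilde z=\tilde y_j(\tilde x)$) and the fact that $\Psi_j^{-1}$ is Lipschitz with some constant $L_j<\infty$ (a $C^\infty$-diffeomorphism close to the identity),
\[
 \Dcal_\Omega(\tilde y_j)=\iint_{\tilde y_j(\Omega)\times\tilde y_j(\Omega)}\frac{\abs{(\tilde y_j)^{-1}(z)-(\tilde y_j)^{-1}(\tilde z)}^{q}}{\abs{z-\tilde z}^{d+sq}}\d z\d\tilde z\ \le\ L_j^{q}\,[y^{-1}]_{W^{s,q}(W_j)}^{q}\ =:\ D_j\ <\ \infty .
\]
Finally, given $\eps(k)\to 0^+$, the standard diagonal lemma lets me pick $j(k)\to\infty$ so slowly that simultaneously $\eps(k)D_{j(k)}\to 0$, $\norm{\tilde y_{j(k)}-y}_{W^{1,p}}\to 0$, and $\Ecal(\tilde y_{j(k)})\to\Ecal(y)$. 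Setting $y_k:=\tilde y_{j(k)}$ gives $y_k\rightharpoonup y$ in $W^{1,p}$ and
\[
 \limsup_k E_{\eps(k)}(y_k)=\limsup_k\bigl(\Ecal(y_k)+\eps(k)\Dcal_\Omega(y_k)\bigr)\le\lim_k\bigl(\Ecal(y_k)+\eps(k)D_{j(k)}\bigr)=\Ecal(y)=E_0(y).
\]

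\emph{Main obstacle.} The only genuinely delicate point is that $y(\Omega)$ --- the image of the Lipschitz domain $\Omega$ under the mere homeomorphism $y$ --- need not be a Lipschitz (or extension) domain, so one cannot directly conclude $\Dcal_\Omega(y)=[y^{-1}]_{W^{s,q}(y(\Omega))}^{q}<\infty$ from $y^{-1}\in W^{1,\sigma}$ and~\eqref{eq:sigma}. Domain shrinking repairs this: $y\circ\Psi_j$ has image compactly contained in the open set $y(\Omega)$, which permits restriction to an auxiliary smooth domain $W_j$ where the fractional embedding applies; the price is that $D_j$ may blow up as $j\to\infty$, which is harmless since one only needs $\eps(k)D_{j(k)}\to 0$. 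Everything else (lower semicontinuity, weak stability of~\eqref{eq:CNC}, continuity of composition with $\Psi_j$, and the area formula) is soft.
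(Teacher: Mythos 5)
Your proof is correct and follows essentially the same route as the paper's: lower bound via weak lower semicontinuity of $\Ecal$, Proposition~\ref{prop:CNC}, and Lemma~\ref{lem:stable}; recovery sequence via domain shrinking $y\circ\Psi_j$, convergence of $\Ecal$ by Lemma~\ref{lem:shrinkcont} applied to $(\det\nabla y)^{-r}$, finiteness of $\Dcal_\Omega(y\circ\Psi_j)$ from the $W^{1,\sigma}\hookrightarrow W^{s,q}$ embedding applied to $y^{-1}$ on a smooth domain squeezed between $y(\Psi_j(\overline\Omega))$ and $y(\Omega)$, and a diagonal choice of $j(k)$. The one place you are slightly more careful than the paper is in tracking the Lipschitz constant $L_j$ of $\Psi_j^{-1}$ (the paper's ``$\Dcal_\Omega(y_j)=\Dcal_{\Omega_j}(y)$'' hides this factor, and instead delegates the key estimate to Proposition~\ref{prop:distor}); the one place you are slightly less careful is the borderline case $s=0$, where ``$W^{0,q}$'' via the Gagliardo seminorm is not the standard space and the paper replaces $s$ by a small $\tilde s>0$ before applying the embedding, but your argument still goes through since $y^{-1}$ is H\"older and the resulting integral is clearly finite.
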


For the proof, we additionally need the following two propositions.

\begin{proposition}[Finite $E_\eps(y)$ implies \eqref{eq:CNC}]\label{prop:CNC}
Suppose that $s,q\geq 0$ and \eqref{eq:dprs} holds,
and let $y\in W_{+}^{1,p}(\Omega,\R^{d})$ such that $\Ecal(y)<\infty$ and $\Dcal(y)<\infty$. 
Then $y$ satisfies \eqref{eq:CNC}.
\end{proposition}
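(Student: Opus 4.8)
The plan is to show that finiteness of $\Dcal_\Omega(y)$ forces $y$ to be a.e.\ injective, which by Remark~\ref{rem:CNC} is equivalent to \eqref{eq:CNC} since $\det\nabla y>0$ a.e.\ is built into $W^{1,p}_+$. First I would transform the double integral defining $\Dcal_\Omega(y)$ using the area formula (Lemma~\ref{lem:transform}), applied in both variables $x$ and $\tilde x$. Writing $z=y(x)$, $\tilde z = y(\tilde x)$, the weight $\abs{\det\nabla y(x)}\abs{\det\nabla y(\tilde x)}\d x\d\tilde x$ pushes forward to $N_y(z,\Omega)\,N_y(\tilde z,\Omega)\,\d z\,\d\tilde z$ on $y(\Omega)\times y(\Omega)$, where $N_y(z,\Omega)=\#\,y^{-1}(\{z\})$. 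However, the factor $\abs{x-\tilde x}^q$ in the numerator depends on the preimages, not just on $z,\tilde z$; this is the one subtlety and the main obstacle. To handle it, I would either restrict to the orientation-preserving ``good'' points where the multiplicity is taken into account carefully, or — more simply — use that $\abs{x-\tilde x}^q$ is bounded below by a positive constant whenever $x,\tilde x$ range over a fixed pair of disjoint positive-measure subsets $A,B\subset\Omega$ with $\dist(A,B)>0$.

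The core argument is then by contradiction: suppose $y$ is not a.e.\ injective. Then there exist measurable sets $A,B\subset\Omega$ with $\abs{A},\abs{B}>0$, $\dist(A,B)\ge\rho>0$, and $\abs{y(A)\cap y(B)}>0$ — indeed, non-injectivity on a positive-measure set means $N_y(z,\Omega)\ge 2$ on a set of positive measure in $y(\Omega)$, and by a Fubini/measurable-selection argument one can extract two such ``sheets'' separated in the domain (if they cannot be separated, a Lebesgue-density argument still produces such $A,B$ after discarding null sets). On $A\times B$ we have $\abs{x-\tilde x}^q\ge\rho^q$, so
\[
\Dcal_\Omega(y)\ge \rho^q\iint_{A\times B}\frac{\abs{\det\nabla y(x)}\abs{\det\nabla y(\tilde x)}}{\abs{y(x)-y(\tilde x)}^{d+sq}}\d x\,\d\tilde x.
\]
Applying Lemma~\ref{lem:transform} in each variable, the right-hand side equals $\rho^q\iint_{y(A)\times y(B)}\abs{y(x)-y(\tilde x)}^{-(d+sq)}$ — more precisely $\rho^q\int_{\R^d}\int_{\R^d}\abs{z-\tilde z}^{-(d+sq)}N_y(z,A)N_y(\tilde z,B)\,\d z\,\d\tilde z$ — which is at least $\rho^q\int_{y(A)\cap y(B)}\int_{y(A)\cap y(B)}\abs{z-\tilde z}^{-(d+sq)}\,\d z\,\d\tilde z$ since $N_y(\cdot,A),N_y(\cdot,B)\ge 1$ on $y(A)\cap y(B)$.

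It remains to observe that for any measurable set $S\subset\R^d$ with $\abs{S}>0$, the integral $\iint_{S\times S}\abs{z-\tilde z}^{-(d+sq)}\,\d z\,\d\tilde z=+\infty$ because $d+sq\ge d$, so the kernel $\abs{z-\tilde z}^{-(d+sq)}$ is non-integrable near the diagonal even over a set of finite positive measure — this is elementary (fix a Lebesgue density point of $S$, restrict to a small ball where $S$ has density close to $1$, and pass to polar coordinates; the radial integral of $\rho^{-(d+sq)}\rho^{d-1}=\rho^{-1-sq}$ diverges at $0$). This contradicts $\Dcal_\Omega(y)<\infty$. Hence $y$ is a.e.\ injective, and \eqref{eq:CNC} follows. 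I expect the delicate point to be the rigorous extraction of the separated sets $A,B$ with $\abs{y(A)\cap y(B)}>0$ from mere a.e.\ non-injectivity; a clean way is to use that $\{(x,\tilde x): y(x)=y(\tilde x),\ x\neq\tilde x\}$ has positive $(\Omega\times\Omega)$-measure when injectivity fails on a positive-measure set (again via the area formula counting $N_y\ge 2$), then invoke that this set is separated from the diagonal on a positive-measure piece.
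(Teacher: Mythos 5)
Your overall plan — bound $\abs{x-\tilde x}^q$ from below on a pair of separated sets whose images overlap, then change variables and exploit the non-integrability of $\abs{\xi-\tilde\xi}^{-(d+sq)}$ across the diagonal — is the same basic strategy the paper uses. The part about the divergence (density point, dyadic/polar estimate, $sq\geq 0$) is correct. The genuine gap is in the \emph{extraction} of the sets $A,B$, precisely the step you flag as delicate, and your proposed ``clean way'' to close it is false.

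You suggest using that $P:=\{(x,\tilde x)\in\Omega\times\Omega : y(x)=y(\tilde x),\ x\neq\tilde x\}$ has positive $2d$-dimensional measure whenever a.e.\ injectivity fails. It does not: under \eqref{eq:dprs}, Proposition~\ref{prop:homeo} makes $y$ open and \emph{discrete}, so each fiber $y^{-1}(\{y(x)\})$ is a discrete, hence countable, subset of $\Omega$; by Fubini, $P$ then has measure zero in $\Omega\times\Omega$ regardless of how badly injectivity fails. (One can also see this without discreteness: $P$ is the preimage of the diagonal $\Delta\subset\R^d\times\R^d$ under $(x,\tilde x)\mapsto(y(x),y(\tilde x))$, and $\Delta$ is $\mathcal L^{2d}$-null while the Jacobian $\det\nabla y(x)\det\nabla y(\tilde x)$ is nonzero a.e., so the preimage is null by Lusin's property $(\mathrm N^{-1})$.) So $\abs{Z_2}>0$ gives a $d$-dimensional ``sheet'' of coincidences, not a $2d$-dimensional one, and the Fubini-type argument you envision does not get off the ground.

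The paper closes exactly this gap topologically, and this is why hypothesis \eqref{eq:dprs} (stronger than just $p>d$) is assumed: by Proposition~\ref{prop:homeo} the map $y$ is \emph{open}. The paper shows $X_2:=y^{-1}(Z_2)$ is then open, and, given $x,\tilde x\in X_2$ with $y(x)=y(\tilde x)$ and $x\neq\tilde x$, uses openness of $y$ to produce disjoint open sets $V,W\subset X_2$ with $\overline V\cap\overline W=\emptyset$ and $y(W)\subset y(V)$ \emph{open}; after the change of variables, every $\tilde\xi\in y(W)$ is an interior point of the domain of the inner integral and the singularity forces $\Dcal(y)=+\infty$. Your proof never invokes openness, and in fact, if it worked as written it would prove the proposition with only $p>d$ and $r>0$, which is a warning sign: the extra strength of \eqref{eq:dprs} is being used. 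A purely measure-theoretic extraction of $A,B$ with $\dist(A,B)>0$ and $\abs{y(A)\cap y(B)}>0$ may well be salvageable (e.g.\ cover $\Omega$ by small cubes and pigeonhole over pairs $Q_i,Q_j$ with $\dist(Q_i,Q_j)>0$ such that $\abs{y(Q_i\cap\Omega)\cap y(Q_j\cap\Omega)}>0$), but you did not carry it out, and the specific fix you propose is incorrect.
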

\begin{proof}
The proof is indirect. Suppose that \eqref{eq:CNC} does not hold. By the area formula (cf.~Lemma~\ref{lem:transform}),
this means that $Z_2:=\{z\in \R^d\mid N_y(z,\Omega)\geq 2\}$ has positive measure.
As a consequence, $X_2:=y^{-1}(Z_2)$ also has positive measure, because $y$ satisfies Lusin's property (N) as a map in $W^{1,p}$ with $p>d$. In addition, we claim that $X_2$ is open.
For a proof, take any $x\in X_2\neq \emptyset$. By definition of $X_2$, there exists another point $\tilde{x}\in X_2\setminus \{x\}$ such that $y(x)=y(\tilde{x})$. If we choose disjoint open neighborhoods $U,\tilde{U}\subset \Omega$ of $x,\tilde{x}$, respectively,
then $y(U)$ and $y(\tilde{U})$ are open sets because $y$ is open by Proposition~\ref{prop:homeo}. Hence, their intersection $y(U)\cap y(\tilde{U})\subset Z_2$ is also open, and it contains $y(x)=y(\tilde{x})$. By continuity of $y$, we conclude that 
$y^{-1}(y(U)\cap y(\tilde{U}))$ is now an open subset of $X_2$ containing $x$ (and $\tilde{x}$).
The above construction in particular shows that we can have two open, nonempty sets $V,W\subset X_2\subset \Omega$ with
$x\in V\subset U\cap y^{-1}(y(U)\cap y(\tilde{U}))$ and $W:=\tilde{U}\cap y^{-1}(y(V))$ 
such that $\overline V\cap \overline W=\emptyset$ and $y(W)\subset y(V)$ are open. Hence, with
\[
	\delta:=\min\{ \nabs{x-\tilde{x}} \mid x\in \overline V,~\tilde{x}\in \overline W\}>0,
\]
we have that
\[
	\Dcal(y)\geq \iint_{V\times W}\frac{{\delta}^{q}}{{\nabs{y(x)-y(\tilde{x})}}^{d+sq}}
\nabs{\det\nabla y(x)}\nabs{\det\nabla y(\tilde{x})}
\,\d x\d\tilde x.
\]
Changing variables in both integrals using Lemma~\ref{lem:transform}, also using that
$N_y\geq 1$ on the image of $y$, we infer that
\[
	\Dcal(y)\geq {\delta}^{q}\int_{y(W)}\int_{y(V)} \frac{1}{\nabs{\xi-\tilde{\xi}}^{d+sq}}
\,\d\xi\d\tilde\xi.
\]
In the inner integral, for each $\tilde\xi\in y(W)$, there is always at least one 
singularity at $\xi=\tilde\xi\in y(W)\subset y(V)$, an interior point. Since $sq\geq 0$,
this implies that $\Dcal(y)=+\infty$, contradicting our assumption.
\end{proof}

\begin{proposition}\label{prop:distor}
 Let $y\in W_{+}^{1,p}(\Omega,\R^{d})$ be a homeomorphism
 $\Omega\to y(\Omega)$
 with $y^{-1}\in W^{1,\sigma}(y(\Omega),\Omega)$, $q\in[1,\infty)$, $s\in[0,1)$. In addition, suppose that \eqref{eq:dprs} and~\eqref{eq:sigma} hold. 
 If $\Omega'$ is open and $\Omega'\subset\subset\Omega$
 then $\Dcal_{\Omega'}(y|_{\Omega'})<\infty$.
\end{proposition}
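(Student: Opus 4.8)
The plan is to recognize $\Dcal_{\Omega'}(y|_{\Omega'})$, after a change of variables, as the $q$-th power of a Sobolev--Slobodecki\u\i\ seminorm of the inverse $y^{-1}$ on the open set $y(\Omega')$, and then to bound that seminorm by applying a fractional Sobolev embedding to a compactly supported modification of $y^{-1}$. The hypothesis \eqref{eq:sigma} is precisely the scaling inequality under which this embedding holds.

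First I would carry out the change of variables advertised in the introduction. Since $y\colon\Omega\to y(\Omega)$ is a homeomorphism onto a set that is open in $\R^d$ (by invariance of domain, or Proposition~\ref{prop:homeo}), with $\det\nabla y>0$ a.e.\ and $y$ injective, we have $N_y(\cdot,\Omega')=\mathbf 1_{y(\Omega')}$. Applying the area formula (Lemma~\ref{lem:transform}) first in the $x$-variable for fixed $\tilde x$, then in the $\tilde x$-variable — all integrands being nonnegative, so that the identities hold in $[0,\infty]$ irrespective of finiteness — transforms
\[
  \Dcal_{\Omega'}(y|_{\Omega'})=\iint_{\Omega'\times\Omega'}\frac{\abs{x-\tilde x}^{q}}{\abs{y(x)-y(\tilde x)}^{d+sq}}\nabs{\det\nabla y(x)}\nabs{\det\nabla y(\tilde x)}\d x\d\tilde x
\]
into
\[
  \Dcal_{\Omega'}(y|_{\Omega'})=\iint_{y(\Omega')\times y(\Omega')}\frac{\abs{y^{-1}(\xi)-y^{-1}(\tilde\xi)}^{q}}{\abs{\xi-\tilde\xi}^{d+sq}}\d\xi\d\tilde\xi=[y^{-1}]_{W^{s,q}(y(\Omega'))}^q .
\]

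Next I would localize away from the possibly irregular set $y(\Omega)$. Because $\Omega'\subset\subset\Omega$ and $y$ is a homeomorphism, $K:=y(\overline{\Omega'})$ is a compact subset of the open set $y(\Omega)$ and $\overline{y(\Omega')}\subset K$. Pick $\varphi\in C_c^\infty(y(\Omega))$ with $\varphi\equiv 1$ on a neighborhood of $\overline{y(\Omega')}$ and set $w:=\varphi\,y^{-1}$, extended by $0$ outside $y(\Omega)$. On $\operatorname{supp}\varphi$ the map $y^{-1}$ is continuous, hence bounded, and $\nabla y^{-1}\in L^\sigma$ by hypothesis, so $w\in W^{1,\sigma}(\R^d)$ with compact support; fix an open ball $B$ with $\operatorname{supp}w\subset B$. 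Since $w=y^{-1}$ on $y(\Omega')\subset B$ and the Gagliardo integrand is nonnegative, $[y^{-1}]_{W^{s,q}(y(\Omega'))}\le [w]_{W^{s,q}(B)}$, and it suffices to show $w\in W^{s,q}(B)$.

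This last step is where \eqref{eq:sigma} enters, and it is the main obstacle: it amounts to the fractional Sobolev embedding $W^{1,\sigma}(B)\hookrightarrow W^{s,q}(B)$ on the smooth bounded domain $B$, which holds exactly when $0<s\le1$ and $1-\tfrac d\sigma\ge s-\tfrac dq$, i.e.\ under \eqref{eq:sigma}. When $q\le\sigma$ this is elementary, via $W^{1,\sigma}(B)\hookrightarrow W^{s,\sigma}(B)\hookrightarrow W^{s,q}(B)$, the last inclusion because $B$ is bounded. The substantial case is $q>\sigma$, where neither the H\"older bound $\abs{w(\xi)-w(\tilde\xi)}\le C\abs{\xi-\tilde\xi}^{1-d/\sigma}$ (available since $\sigma>d$) nor the $L^\sigma$-control of $\nabla w$ alone suffices, and one must combine them — either by interpolating the two estimates, or by chaining $W^{1,\sigma}(B)\hookrightarrow W^{s_1,\sigma}(B)\hookrightarrow W^{s,q}(B)$ with $s<s_1<1$ chosen so that $s_1-\tfrac d\sigma\ge s-\tfrac dq$, which is possible precisely because \eqref{eq:sigma} holds (at the endpoint $s-\tfrac dq=1-\tfrac d\sigma$, which forces $q>\sigma$, one invokes the critical fractional Sobolev embedding directly). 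Apart from this embedding, the only points needing care are the rigor of the double change of variables despite the singular, potentially non-integrable integrand — dealt with by working throughout with nonnegative integrands — and the fact that $y(\Omega')$ need not be a Lipschitz domain, which is exactly why the cutoff-and-extension to $B$ is performed before the embedding is applied.
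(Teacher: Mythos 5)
Your proof follows essentially the same overall strategy as the paper: a double change of variables via the area formula (Lemma~\ref{lem:transform}) to express $\Dcal_{\Omega'}(y|_{\Omega'})$ as the Gagliardo seminorm $[y^{-1}]_{W^{s,q}(y(\Omega'))}^q$, followed by the fractional Sobolev embedding $W^{1,\sigma}\hookrightarrow W^{s,q}$, which is exactly where \eqref{eq:sigma} enters. Where you differ from the paper is in the localization used to move away from the possibly irregular set $y(\Omega')$. The paper picks a cutoff $\psi$ supported in $y(\Omega)$ with $\psi\equiv 1$ on $y(\Omega')$ and uses Sard's theorem to choose a regular value, so that the component $\Upsilon$ of $\psi^{-1}((r,1])$ containing $y(\Omega')$ has smooth boundary; it then applies the embedding on $\Upsilon$ and estimates $\|y^{-1}\|_{W^{1,\sigma}(\Upsilon)}$. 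You instead define $w=\varphi\,y^{-1}$ with a $C_c^\infty$ cutoff, extend by zero to $\R^d$, and apply the embedding on a fixed ball $B\supset\operatorname{supp} w$. Both are valid; your route avoids the regular-value step, but has to check (as you do) that $y^{-1}$ is bounded on $\operatorname{supp}\varphi$ so that the product rule gives $w\in W^{1,\sigma}(\R^d)$.

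Two points in your treatment of the embedding need attention. First, you state the embedding only for $0<s\le 1$, but the proposition permits $s=0$, for which the integral still carries the singular kernel $\nabs{\xi-\tilde\xi}^{-d}$ and is not an $L^q$ norm. The paper handles this explicitly by passing to $W^{\tilde s,q}$ for a small $\tilde s>0$; on a bounded set one has $[f]_{W^{0,q}(B)}^q\le(\operatorname{diam}B)^{\tilde sq}[f]_{W^{\tilde s,q}(B)}^q$, so the gap is easy to fill, but it must be mentioned. Second, your claim that the case $q\le\sigma$ is ``elementary, via $W^{1,\sigma}(B)\hookrightarrow W^{s,\sigma}(B)\hookrightarrow W^{s,q}(B)$, the last inclusion because $B$ is bounded'' does not hold up: for the Gagliardo seminorm, H\"older's inequality applied to the kernel at fixed regularity $s$ produces the divergent factor $\iint_{B\times B}\nabs{\xi-\tilde\xi}^{-d}\d\xi\d\tilde\xi$, so boundedness of $B$ does not by itself give the seminorm comparison. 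The correct route for $q\le\sigma$ is the same chain you propose for $q>\sigma$, through an intermediate $W^{s',\sigma}(B)$ with $s<s'<1$ and $s'-d/\sigma\ge s-d/q$; your case split is artificial. None of this affects the soundness of the conclusion — you and the paper ultimately invoke the same fractional embedding theorem — but the explanatory remarks surrounding it should be corrected.
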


\begin{proof}
 We apply Lemma~\ref{lem:transform} twice to change variables in each of the two integrals in $\mathcal D_{\Omega'}$, 
 with $E=\Omega'$. First, change variables in the inner integral, say, over $x$, using $z=\xi$ and 
$f(z)={\nabs{y^{-1}(z)-\tilde{x}}^{q}}{\nabs{z-y(\tilde{x})}^{-(d+sq)}}$ for any fixed $\tilde{x}\in \Omega'$. Afterwards, use Fubini's theorem to change the order of integration and change variables in the integral over $\tilde{x}$, now for any fixed $\xi$
using $z=\tilde{\xi}$ and $f(z)={\nabs{y^{-1}(\xi)-y^{-1}(z)}^{q}}{\nabs{\xi-z}^{-(d+sq)}}$.
We thus obtain that
 \begin{align*}
 \mathcal D_{\Omega'}(y|_{\Omega'})
 &=\iint_{y(\Omega')\times y(\Omega')}\frac{\nabs{y^{-1}(\xi)-y^{-1}(\tilde \xi)}^{q}}{\nabs{\xi-\tilde\xi}^{d+sq}}
\d\xi\d\tilde\xi.
 \end{align*}
 The right-hand side is just the $q$-th power of
 the seminorm belonging to the Sobolev--Slobodecki\u\i\
 space $W^{s,q}(y(\Omega'),\R^{d})$.
 As $\overline{y(\Omega')}\subset y(\Omega)$
 we may find $\psi\in C^{\infty}(\R^{d})$ supported in
 $y(\Omega)$ with $\psi=1$ on $y(\Omega')$.
 Choosing any regular value $r\in(0,1)$,
 the set $\psi^{-1}((r,1])$
 has a smooth boundary.
 We denote its component containing $y(\Omega')$ by~$\Upsilon$.
 In case $s\in(0,1)$, using  $y(\Omega')\subset\Upsilon\subset y(\Omega)$
 and applying the embedding theorem, we infer
 \begin{align*}
 \mathcal D_{\Omega'}(y|_{\Omega'})
 &\le \sq{y^{-1}|_{y(\Omega')}}_{W^{s,q}(y(\Omega'),\R^{d})}^{q}
 \le \sq{y^{-1}|_{\Upsilon}}_{W^{s,q}(\Upsilon,\R^{d})}^{q} \\
 &\le C_{d,q,\sigma,\Upsilon}\norm{y^{-1}|_{\Upsilon}}_{W^{1,\sigma}(\Upsilon,\R^{d})}^{q}
 \le C_{d,q,\sigma,\Upsilon}\norm{y^{-1}|_{y(\Omega)}}_{W^{1,\sigma}(\Omega,\R^{d})}^{q}.
 \end{align*}
The case $s=0$ is similar; in the intermediate step above, we now use $W^{\tilde{s},q}$ with some $\tilde{s}>0$ small enough so that $W^{1,\sigma}$ still embeds into $W^{\tilde{s},q}$ since $\sigma> \frac{dq} {q+d}$.
\end{proof}

\begin{proof}[Proof of Theorem~\ref{thm:main}]
\emph{Lower bound ($\mathit\Gamma$-lim\,inf-inequality):} Assume that $y_{k}\rightharpoonup y$ in $W^{1,p}$
 and $\eps_{k}\searrow 0$. 
If $\liminf_k E_{\eps_k}(y_k)=+\infty$, there is nothing to show.
Hence, passing to a suitable subsequence (not relabeled), we may assume that 
the $\liminf$ is a limit and $E_{\eps_k}(y_k)$ is bounded.
Since
 $\Dcal\ge0$ and $\Ecal$ is weakly lower semicontinuity, we get that
\begin{align}\label{thmmain-wlsc}
	\lim_k E_{\eps_k}(y_k)\geq \liminf_k \Ecal(y_k)\geq \Ecal(y).
\end{align}
Moreover, by Proposition~\ref{prop:CNC}, we see 
that $y_k$ satisfies~\eqref{eq:CNC} for all $k$, and by Lemma~\ref{lem:stable}, this implies that
$y$ satisfies~\eqref{eq:CNC}. Hence, $\Ecal(y)=E_0(y)$, and \eqref{thmmain-wlsc} thus implies the asserted lower bound.
 
\emph{Upper bound (construction of a recovery sequence):} 
Let $y\in W^{1,p}(\Omega;\R^d)$ be given. We may assume that $E_0(y)<\infty$, because otherwise there is nothing to show. We therefore have that
$y\in W_{+}^{1,p}(\Omega;\R^d)$, $y$ satisfies \eqref{eq:CNC} and $\Ecal(y)<\infty$.
 
By Proposition~\ref{prop:homeo}, the map $y:\Omega\to y(\Omega)$ is a homeomorphism.
 We choose $j\in\N$ sufficiently large and shrink the domain $\Omega$ to $\Omega_{j}=\Psi_j(\Omega)$, 
using Lemma~\ref{lem:shrinking}.
Now define
\[ y_{j}:\Omega\to y(\Omega_{j}), \qquad y_{j}:=y|_{\Omega_{j}}\circ \Psi_j. \]
As $j\to \infty$, $y_{j}\to y$ in $W^{1,p}(\Omega;\R^d)$ 
and $\Ecal(y_{j})\to\Ecal(y)$ by Lemma~\ref{lem:shrinkcont}. Here, 
concerning the term $(\det\nabla y)^{-r}$ in $\Ecal$, notice that 
by the chain rule and the muliplicativity of the determinant,
we have that 
\[
\frac{1}{(\det\nabla y_j)^{r}}
=(f\circ\Psi_j) \frac{1}{(\det\nabla \Psi_j)^{r}},\quad\text{with}~~~
f:=\frac{1}{(\det\nabla y)^{r}}\in L^1(\Omega).
\]
Combined with the fact that $\det\nabla \Psi_j\to 1$ uniformly,
Lemma~\ref{lem:shrinkcont} can therefore indeed be applied with $k=0$ and $p=1$ to obtain convergence of this singular term in $\Ecal$. 

Since $\Omega_j\subset\subset \Omega$ for each $j$,
 we also have that
\[
	\Dcal_{\Omega}(y_j)=\Dcal_{\Omega_j}(y) <\infty
\]
 by change of variables and Proposition~\ref{prop:distor}.

 Now let $\eps_{k}\searrow0$ be given.
 We choose $j_{k}\to\infty$ such that
 $\eps_{k}\Dcal(y_{j_{k}})\xrightarrow{k\to\infty}0$.
 So
 $E_{\eps_{k}}(y_{j_{k}}) = \Ecal(y_{j_{k}}) + \eps_{k}\Dcal(y_{j_{k}})\xrightarrow{k\to\infty}
 \Ecal(y) = E_{0}(y)$. 
\end{proof}

\subsection{Bulk self-repulsion near the boundary}\label{ssec:mainbulkboundary}
Here, we consider $\Dcal := \Dcal_{U_{\delta}}$
where, for any $\delta>0$, the set $U_{\delta}\subset\Omega$
can be chosen as any open neighborhood of $\partial\Omega$
which is at least $\delta$-thick in the sense that
\begin{align}\label{eq:delta-thick} 
U_{\delta}\supset \br{\partial\Omega}^{(\delta)}
= \sett{x\in\Omega}{\dist(x,\partial\Omega)<\delta}. 
\end{align}
For the ease of notation, we abbreviate
$\Dcal_{\delta} := \Dcal_{U_{\delta}}$ so that
\[ \Dcal(y) = \Dcal_{\delta}(y) = 
\iint_{U_{\delta}\times U_{\delta}}\frac{\abs{x-\tilde x}^{q}}{\abs{y(x)-y(\tilde x)}^{d
+sq}}\abs{\det \nabla y(x)}\abs{\det \nabla y(\tilde x)}
\d x\d\tilde x \]
for $y\in W^{1,p}(\Omega,\R^{d})$, $q\in[1,\infty)$, $s\in[0,1)$.
The combined energy $E_\eps$ now also depends on the choice of $U_\delta$,  
and to make this more visible, we will now write
\[ E_{\eps,\delta} (y) = \begin{cases}
\Ecal(y) + \eps \Dcal_\delta(y) & \text{if }y\in W_{+}^{1,p}(\Omega,\R^{d}), \\
+\infty &\text{else}.
\end{cases}
 \]
We will see that $E_0$ is still the correct limit functional  
independently of $\delta$. In fact, we can even allow the simultaneous limit as 
$(\eps,\delta)\to (0,0)$. 
\begin{remark}
The fact that the limit as $\delta\to 0^+$ is admissible offers 
an attractive choice of $U_\delta$ for numerical purposes: a single boundary layer of the triangulation, which requires $\delta$ of the order of the grid size $h$. 
In that case, the cost of a numerical evaluation of $\Dcal_\delta$ scales like $h^{-2(d-1)}$ 
(like a double integral on the surface), which is much cheaper than for $\Dcal_\Omega$
which scales like $h^{-2d}$.
\end{remark}

As before, for fixed $\eps,\delta>0$, the functional $E_{\eps,\delta}$ is well suited for  
minimization by the direct method:  
\begin{proposition}\label{prop:lscD2}
For $p>d$, the functional
$\Dcal_\delta:W^{1,p}(\Omega;\R^d)\to [0,\infty]$ 
is lower semicontinuous with respect to weak convergence in $W^{1,p}$.
\end{proposition}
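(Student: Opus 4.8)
The plan is to follow the proof of Proposition~\ref{prop:lscD1} almost verbatim, the only change being that the domain of integration $\Omega\times\Omega$ is replaced throughout by the open set $U_\delta\times U_\delta\subset\R^d\times\R^d$. Since the symbol $\delta$ is already reserved for the thickness parameter of $U_\delta$, I would introduce a fresh truncation parameter $\eta>0$ and set
\[
 \Dcal_\delta^{[\eta]}(y):=\iint_{U_\delta\times U_\delta}\frac{\abs{x-\tilde x}^{q}}{\max\{\eta,\abs{y(x)-y(\tilde x)}^{d+sq}\}}\abs{\det \nabla y(x)}\abs{\det \nabla y(\tilde x)}\d x\d\tilde x\le \Dcal_\delta(y),
\]
noting that $\Dcal_\delta(y)=\sup_{\eta>0}\Dcal_\delta^{[\eta]}(y)$ by monotone convergence. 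It therefore suffices to show that each $\Dcal_\delta^{[\eta]}$ is weakly lower semicontinuous, a pointwise supremum of weakly lower semicontinuous functionals being again weakly lower semicontinuous.

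Next I would fix a sequence $y_k\rightharpoonup y$ in $W^{1,p}(\Omega;\R^d)$. By the compact embedding $W^{1,p}(\Omega)\hookrightarrow C(\overline\Omega)$ (using $p>d$), $y_k\to y$ uniformly, so the coefficient $\abs{x-\tilde x}^{q}/\max\{\eta,\abs{y_k(x)-y_k(\tilde x)}^{d+sq}\}$ converges uniformly on $U_\delta\times U_\delta$ to the same expression with $y$ in place of $y_k$; since $\det\nabla y_k$ is bounded in $L^{p/d}(\Omega)$, the Jacobian products $\abs{\det\nabla y_k(x)}\abs{\det\nabla y_k(\tilde x)}$ are bounded in $L^1(U_\delta\times U_\delta)$, and hence
\[
 \liminf_{k\to\infty}\Dcal_\delta^{[\eta]}(y_k)=\liminf_{k\to\infty}\iint_{U_\delta\times U_\delta}W_y\bigl(x,\tilde x,\det\nabla y_k(x),\det\nabla y_k(\tilde x)\bigr)\d x\d\tilde x,
\]
where $W_y(x,\tilde x,J,\tilde J):=\dfrac{\abs{x-\tilde x}^{q}}{\max\{\eta,\abs{y(x)-y(\tilde x)}^{d+sq}\}}\abs{J}\abs{\tilde J}$.

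Now $W_y$ is nonnegative, measurable in $(x,\tilde x)$, symmetric under exchanging $(x,\tilde x)$ and $(J,\tilde J)$, and separately convex in $(J,\tilde J)$; by \cite[Theorem 2.5]{Pe16a} (cf.\ also \cite[Theorem 11]{El11Pa}) the functional $J\mapsto\iint_{U_\delta\times U_\delta}W_y(x,\tilde x,J(x),J(\tilde x))\d x\d\tilde x$ is weakly lower semicontinuous on $L^{p/d}(U_\delta)$. Since $\det\nabla y_k\rightharpoonup\det\nabla y$ weakly in $L^{p/d}(\Omega)$ by the weak continuity of the determinant, hence also weakly in $L^{p/d}(U_\delta)$ by restriction, we obtain $\liminf_k\Dcal_\delta^{[\eta]}(y_k)\ge\Dcal_\delta^{[\eta]}(y)$, and the proof is complete. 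I do not expect a genuine obstacle here: the only mildly delicate point is the same one already handled in Proposition~\ref{prop:lscD1}, namely combining the uniform convergence of $y_k$ (used for the $y$-dependent coefficient) with the merely weak $L^{p/d}$ convergence of the Jacobians (used via separate convexity), together with observing that passing from $\Omega\times\Omega$ to the open set $U_\delta\times U_\delta$ does not affect the applicability of \cite[Theorem 2.5]{Pe16a}; if desired, one may equivalently apply that result on $\Omega\times\Omega$ to the integrand $\mathbf 1_{U_\delta}(x)\,\mathbf 1_{U_\delta}(\tilde x)\,W_y$, which remains measurable, symmetric and separately convex in $(J,\tilde J)$.
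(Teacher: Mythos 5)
Your argument is correct and follows the same plan as the paper: replace $\Omega$ by $U_\delta$ in the proof of Proposition~\ref{prop:lscD1}, truncate the singular denominator, apply \cite[Theorem 2.5]{Pe16a} via separate convexity, and pass to the supremum over truncation levels. The one point worth flagging, which the paper's proof singles out explicitly, is that $U_\delta$ is merely open and need not have any boundary regularity, so one should check that the nonlocal lower-semicontinuity result is still applicable there. The paper handles this by an inner exhaustion of $U_\delta$ with smooth subdomains, writing $\Dcal_\delta$ as a supremum of weakly lower semicontinuous functionals over these regular domains. Your alternative --- applying \cite[Theorem 2.5]{Pe16a} on $\Omega\times\Omega$ with the integrand $\mathbf 1_{U_\delta}(x)\mathbf 1_{U_\delta}(\tilde x)W_y$ --- also works as long as that theorem tolerates an integrand that is only measurable (not continuous) in $(x,\tilde x)$; your modified integrand remains nonnegative, symmetric, and separately convex in $(J,\tilde J)$, so the structural hypotheses are intact. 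Either workaround closes the same small gap; the paper's exhaustion argument is slightly more robust because it keeps the spatially continuous integrand from Proposition~\ref{prop:lscD1}, whereas yours introduces discontinuous indicator factors.
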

\begin{proof}                                                              
This is Proposition~\ref{prop:lscD1} with $\Omega$ replaced by $U_\delta$. 
Here, notice that no boundary regularity of $U_\delta$ is required: If needed, 
we can cover $U_\delta$ from inside with open domains with smooth boundary, 
and since the integrand of $\Dcal_\delta$ is nonnegative, we can therefore write
$\Dcal_\delta$ as a supremum of weakly lower semicontinuous funtionals 
using the smooth smaller domains as domain of integration. 
\end{proof}

\begin{theorem}\label{thm:main2}
Let $\Omega\subset \R^d$ be a bounded Lipschitz domain such that $\R^d\setminus \partial\Omega$ has exactly two connected components, $q\in[1,\infty)$, $s\in[0,1)$. In addition, suppose that \eqref{eq:dprs} and \eqref{eq:sigma} hold.
Then for any $\delta_0\in [0,\infty]$, the functionals 
$E_{\eps,\delta}$ $\Gamma$-converge to $E_{0}$
 as $(\eps,\delta)\to (0,\delta_0)$ ($\eps,\delta>0$), with respect to the weak topology of $W^{1,p}(\Omega,\R^{d})$.
\end{theorem}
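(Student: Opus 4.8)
The plan is to follow the same two-part scheme as for Theorem~\ref{thm:main}, the only genuinely new ingredient being a boundary version of Proposition~\ref{prop:CNC}: for every $\delta>0$, if $y\in W_{+}^{1,p}(\Omega,\R^{d})$ satisfies $\Ecal(y)<\infty$ and $\Dcal_\delta(y)<\infty$, then~\eqref{eq:CNC} holds. To prove this I would first repeat, localized to $U_\delta$ in place of $\Omega$, the indirect argument of Proposition~\ref{prop:CNC}: if $y$ fails to be a.e.\ injective on $U_\delta$, then $Z_2=\{z\in\R^d\mid N_y(z,U_\delta)\geq2\}$ has positive measure, hence so has $X_2=y^{-1}(Z_2)\cap U_\delta$ by Lusin's property (N), and $X_2$ is open because $y$ is open (Proposition~\ref{prop:homeo}); choosing disjoint open sets $V,W\subset X_2\subset U_\delta$ with $\overline V\cap\overline W=\emptyset$ and $y(W)\subset y(V)$ open, the changes of variables provided by Lemma~\ref{lem:transform} force $\Dcal_\delta(y)=+\infty$, a contradiction. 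Moreover, since $y$ is open and discrete, the level set $\{z\in\R^d\mid N_y(z,U_\delta)\geq2\}$ is open, so a.e.\ injectivity of $y$ on $U_\delta$ in fact upgrades to topological injectivity of $y$ on $U_\delta$, hence on the collar $\br{\partial\Omega}^{(\delta)}\subset U_\delta$. Finally, $y$ being injective on a neighborhood of $\partial\Omega$, open, discrete and sense-preserving on $\Omega$, and $\R^d\setminus\partial\Omega$ having exactly two connected components, the global invertibility result of~\cite{Kroe20a} (used in the same spirit as \cite[Thm.~6.8]{Kroe20a} in the proof of Proposition~\ref{prop:homeo}) yields that $y$ is a homeomorphism $\Omega\to y(\Omega)$, whence~\eqref{eq:CNC} by Remark~\ref{rem:CNC}.

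Granted this, the lower bound follows the template of Theorem~\ref{thm:main}: given $y_k\rightharpoonup y$ in $W^{1,p}$, $\eps_k\to0^+$ and $\delta_k\to\delta_0$, we may assume $\sup_k E_{\eps_k,\delta_k}(y_k)<\infty$ and that the $\liminf$ is a limit; then $\Ecal(y_k)<\infty$ and $\Dcal_{\delta_k}(y_k)<\infty$ for every $k$ (note that only $\delta_k>0$ enters here, so the case $\delta_0=0$ is not special), so the boundary version of Proposition~\ref{prop:CNC} gives~\eqref{eq:CNC} for every $y_k$, and Lemma~\ref{lem:stable} transfers~\eqref{eq:CNC} to $y$; combined with weak lower semicontinuity of $\Ecal$ (Proposition~\ref{prop:lscE}) and $\Dcal_{\delta_k}\geq0$, this gives $\liminf_k E_{\eps_k,\delta_k}(y_k)\geq\Ecal(y)=E_0(y)$.

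For the recovery sequence I would reuse the construction from the proof of Theorem~\ref{thm:main} essentially verbatim, exploiting $\Dcal_\delta\leq\Dcal_\Omega$ for every $\delta>0$ (the integrand is nonnegative and $U_\delta\subset\Omega$). If $E_0(y)<\infty$, so that $y$ is a homeomorphism with $y^{-1}\in W^{1,\sigma}$ by Proposition~\ref{prop:homeo}, put $y_j:=y|_{\Omega_j}\circ\Psi_j$ with the domain-shrinking maps $\Psi_j$ of Lemma~\ref{lem:shrinking} and $\Omega_j:=\Psi_j(\Omega)\subset\subset\Omega$; then $y_j\to y$ in $W^{1,p}$ and $\Ecal(y_j)\to\Ecal(y)$ as in Theorem~\ref{thm:main}, while $M_j:=\Dcal_\Omega(y_j)<\infty$ by Proposition~\ref{prop:distor}, so $\sup_{\delta>0}\Dcal_\delta(y_j)\leq M_j<\infty$. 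Given $\eps_k\to0^+$ and $\delta_k\to\delta_0$, a diagonal choice $j_k\to\infty$ with $\eps_k M_{j_k}\to0$ then produces $y_{j_k}\to y$ in $W^{1,p}$ with $E_{\eps_k,\delta_k}(y_{j_k})\leq\Ecal(y_{j_k})+\eps_k M_{j_k}\to\Ecal(y)=E_0(y)$.

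I expect the main obstacle to be the boundary version of Proposition~\ref{prop:CNC}, and within it the passage from ``$y$ injective near $\partial\Omega$'' to ``$y$ globally injective'': this is exactly where the hypothesis that $\R^d\setminus\partial\Omega$ has precisely two connected components is needed, via the degree-theoretic global invertibility machinery of~\cite{Kroe20a}, the elastic energy only supplying the local regularity (openness, discreteness) that makes that machinery applicable. The remaining steps are routine adaptations of what was already done for Theorem~\ref{thm:main}.
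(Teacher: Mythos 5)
Your proof follows essentially the same route as the paper: localize the $\Dcal_\delta<\infty$ argument of Proposition~\ref{prop:CNC} to $U_\delta$, upgrade a.e.\ injectivity to topological injectivity near $\partial\Omega$ using openness/discreteness from Proposition~\ref{prop:homeo}, then invoke the global invertibility machinery of \cite{Kroe20a}; and for the $\Gamma$-liminf and recovery sequence, reuse the template of Theorem~\ref{thm:main}, the recovery sequence working unchanged because $\Dcal_\delta\leq\Dcal_\Omega$ (the paper instead computes $\Dcal_\delta(y_j)=\Dcal_{\Psi_j(U_\delta)}(y)\leq\Dcal_{\Omega_j}(y)$ by change of variables, but this is the same estimate). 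The one point where you are imprecise is the final step of your boundary version of Proposition~\ref{prop:CNC}: you cite \cite[Thm.~6.8]{Kroe20a}, but that result takes \eqref{eq:CNC} (equivalently, the degree-$1$ property) as a \emph{hypothesis} and concludes that $y$ is a homeomorphism with Sobolev inverse --- it runs in the opposite direction to what you need here. What the paper actually does is: (a) choose Lipschitz inner approximating domains $\Omega_j\nearrow\Omega$ with $\R^d\setminus\partial\Omega_j$ still having exactly two components and $\partial\Omega_j\subset U_\delta$ for $j$ large; (b) observe that injectivity of $y|_{\partial\Omega_j}$ means $y\in\operatorname{AIB}(\Omega_j)$ in the sense of \cite{Kroe20a}; (c) apply \cite[Thm.~6.1 and Rem.~6.3]{Kroe20a} to get \eqref{eq:CNC} on $\Omega_j$; and (d) pass to the limit by monotone convergence. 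Your sketch jumps directly to ``$y$ is a homeomorphism on $\Omega$,'' which is a stronger conclusion than needed and is not what the cited Thm.~6.8 delivers from your hypotheses. So the idea is right (and you correctly identify that this is the only place the two-component assumption enters), but you should replace the reference by \cite[Thm.~6.1, Rem.~6.3]{Kroe20a} and spell out the inner approximation plus monotone convergence step, rather than claiming global homeomorphicity.
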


For the proof, we additionally need the following modification of
Proposition~\ref{prop:CNC}.

\begin{proposition}[Finite $E_{\eps,\delta}(y)$ implies \eqref{eq:CNC}]\label{prop:CNC2}
Let $\Omega\subset \R^d$ be a bounded Lipschitz domain such that $\R^d\setminus \partial\Omega$ has exactly two connected components, $q\in[1,\infty)$, $s\in[0,1)$. In addition, suppose that \eqref{eq:dprs} holds.
Moreover, let $y\in W_{+}^{1,p}(\Omega,\R^{d})$ such that $\Ecal(y)<\infty$ and $\Dcal_\delta(y)<\infty$. 
Then $y$ satisfies \eqref{eq:CNC} on $\Omega$.
\end{proposition}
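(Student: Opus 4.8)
The plan is to reduce the claim to three ingredients: the openness and discreteness of the continuous representative of $y$ from Proposition~\ref{prop:homeo}; a localized form of the argument proving Proposition~\ref{prop:CNC}, which turns $\Dcal_\delta(y)<\infty$ into injectivity of $y$ on the full boundary layer $U_\delta$; and the global invertibility result of \cite{Kroe20a} for orientation-preserving Sobolev maps that are invertible near the boundary, which under the two-component hypothesis on $\R^d\setminus\partial\Omega$ upgrades injectivity near $\partial\Omega$ to injectivity on all of $\Omega$.

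First, since $\Ecal(y)<\infty$ and \eqref{eq:dprs} holds, Proposition~\ref{prop:homeo} applies: the continuous representative of $y$ (not relabeled) is open and discrete, and $\det\nabla y>0$ a.e., so $y$ is sense-preserving. I would then rerun the indirect argument from the proof of Proposition~\ref{prop:CNC} with $U_\delta$ in place of $\Omega$. If the continuous representative of $y$ were not injective on $U_\delta$, there would be $x\neq\tilde x$ in $U_\delta$ with $y(x)=y(\tilde x)$; choosing disjoint open neighborhoods $U,\tilde U\subset U_\delta$ of $x,\tilde x$ and using that $y$ is open, one obtains, exactly as in Proposition~\ref{prop:CNC}, nonempty open sets $V,W\subset U_\delta$ with $\overline V\cap\overline W=\emptyset$ and $y(W)\subset y(V)$ open. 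With $\delta':=\dist(\overline V,\overline W)>0$, changing variables in both integrals via Lemma~\ref{lem:transform} and using $N_y\geq1$ on the image of $y$ yields
\[
	\Dcal_\delta(y)\ \geq\ (\delta')^{q}\int_{y(W)}\int_{y(V)}\frac{1}{\nabs{\xi-\tilde\xi}^{d+sq}}\,\d\xi\d\tilde\xi\ =\ +\infty,
\]
because for each $\tilde\xi\in y(W)\subset y(V)$ the inner integral has an interior singularity and $sq\geq0$. This contradiction shows that the continuous representative of $y$ is injective on $U_\delta\supset\br{\partial\Omega}^{(\delta)}$, hence on an open neighborhood of $\partial\Omega$ in $\Omega$.

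Finally, I would invoke \cite{Kroe20a}: $y$ is a continuous, open, discrete, sense-preserving $W^{1,p}$ map on a bounded Lipschitz domain $\Omega$ with $\R^d\setminus\partial\Omega$ having exactly two components, and $y$ is injective on a neighborhood of $\partial\Omega$; the boundary invertibility theorem there then gives that $y$ is a homeomorphism of $\Omega$ onto $y(\Omega)$, in particular a.e.\ injective, which by Remark~\ref{rem:CNC} is exactly \eqref{eq:CNC}. If it is preferable to match the precise hypotheses of \cite{Kroe20a}, one may first shrink $U_\delta$ to a smooth collar: by Lemma~\ref{lem:shrinking}, choose $\Omega'=\Psi_j(\Omega)\subset\subset\Omega$ with $\overline\Omega\setminus\Omega'\subset U_\delta$; then $\partial\Omega'=\Psi_j(\partial\Omega)$ is a smooth closed hypersurface for which $\R^d\setminus\partial\Omega'$ again has exactly two components (using $\Psi_j(\overline\Omega)\subset\subset\Omega$), $y$ is injective on the collar $\overline\Omega\setminus\Omega'$, and, $y$ being sense-preserving, $\deg(y,\Omega',\cdot)=N_y(\cdot,\Omega')$ a.e.; a degree computation from the injective boundary values then forces $N_y(\cdot,\Omega')=1$ a.e.\ on the bounded region enclosed by $y(\partial\Omega')$ and $0$ a.e.\ outside it, which together with injectivity on the collar gives $N_y(\cdot,\Omega)\le1$ a.e., i.e.\ \eqref{eq:CNC}.

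The main obstacle is the last step: correctly matching the present data to the exact statement and regularity class of the boundary invertibility theorem in \cite{Kroe20a} — in particular whether it is phrased via an interior neighborhood of $\partial\Omega$ or via the boundary trace of $y$, and verifying that the two-component hypothesis is used precisely where the degree argument needs $y(\partial\Omega')$ to separate $\R^d$ into exactly two pieces. Everything else is either a verbatim repetition of Proposition~\ref{prop:CNC} (with $\Omega$ replaced by $U_\delta$) or standard degree theory for open, discrete, sense-preserving maps.
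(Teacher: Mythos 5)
Your overall strategy --- rerun the Proposition~\ref{prop:CNC} argument on $U_\delta$ using openness of $y$ from Proposition~\ref{prop:homeo}, then pass to \cite{Kroe20a}'s boundary-invertibility theorem on an interior Lipschitz domain whose boundary lies inside $U_\delta$ --- is indeed the paper's route, and the first step is correct. However, the closing step has a gap. With a single $\Omega'=\Psi_j(\Omega)\subset\subset\Omega$ satisfying $\partial\Omega'\subset U_\delta$ and $\Omega\setminus\Omega'\subset U_\delta$, you get $N_y(\cdot,\Omega')\le 1$ a.e.\ (from the degree/\cite{Kroe20a} argument on $\Omega'$) and $N_y(\cdot,\Omega\setminus\Omega')\le 1$ (from injectivity on $U_\delta$), but these only give $N_y(\cdot,\Omega)\le 2$ a.e.; nothing you have written rules out a value $z$ with one preimage in $\Omega'\setminus U_\delta$ and another in $\Omega\setminus\Omega'$. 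The paper closes this by exhausting with $\Omega_j\nearrow\Omega$, $\partial\Omega_j\subset U_\delta$: \cite[Thm.~6.1, Rem.~6.3]{Kroe20a} gives \eqref{eq:CNC} on each $\Omega_j$, and monotone convergence (together with $|y(\Omega_j)|\uparrow |y(\Omega)|$) then yields \eqref{eq:CNC} on $\Omega$ itself. Your first, non-exhaustive variant --- invoking \cite{Kroe20a} directly on $\Omega$ from injectivity on the \emph{interior} neighbourhood $U_\delta$ --- is also not quite legitimate, since that does not by itself give injectivity of the trace $y|_{\partial\Omega}$ (two distinct boundary points can still be identified in the limit); working on the interior surfaces $\partial\Omega_j\subset U_\delta$ is precisely what avoids this. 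A minor point: $\Psi_j(\partial\Omega)$ is only the bi-Lipschitz image of a Lipschitz boundary, hence Lipschitz rather than smooth.
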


\begin{proof}
Analogously to Proposition~\ref{prop:CNC}, we infer that
$y$ satisfies \eqref{eq:CNC} on $U_\delta$. By Proposition~\ref{prop:homeo},
we obtain that $y:U_\delta\to y(U_\delta)$ is a homeomorphism. Here, notice that
for this conclusion, we do not need any regularity of the boundary of $U_\delta$, since it is enough
to apply Proposition~\ref{prop:homeo} with $\Omega$ replaced by subdomains of $U_\delta$ with smooth boundary, and a sequence of such subdomains covers $U_\delta$ from the inside. 

Such an inner covering can also be used for $\Omega$: Choose open $\Omega_{j}\nearrow\Omega$
such that $\partial\Omega_{j}$ is smooth, say, Lipschitz.
In addition, using that $\partial\Omega$ itself is also Lipschitz,
we can make sure that as for $\Omega$, we have that
$\R^d\setminus \partial\Omega_j$ has exactly two connected components.
For any fixed $\delta$, there exists a sufficiently large $j$ such that $\partial\Omega_{j}$ 
is contained in the open $\delta$-neighborhood $(\partial\Omega)^{(\delta)}$ of $\partial\Omega$, 
and therefore $\partial\Omega_{j}\subset U_\delta$.
Consequently, $y|_{\partial\Omega_j}$ is injective, 
which implies that $y\in \operatorname{AIB}(\Omega_j)$ in the sense of \cite[Def.~2.1 and 2.2]{Kroe20a}.
In addition, we know that $y\in W_+^{1,p}(\Omega_j;\R^d)$. 
By \cite[Thm.~6.1 and Rem.~6.3]{Kroe20a}, we infer that $y$ satisfies \eqref{eq:CNC} on $\Omega_j$.
As the latter holds for all $j$, we conclude that
$y$ satisfies \eqref{eq:CNC} on $\Omega$ by monotone convergence.
\end{proof}

\begin{remark}
The proof of Proposition~\ref{prop:CNC2} exploits that $y$ is a homeomorphism near the boundary, which we obtain from Proposition~\ref{prop:homeo}. This forces the relatively restrictive assumptions on $p$ and $r$. While this may be technical to some degree, some restrictions are definitely needed. In fact, by itself, \eqref{eq:CNC} on a boundary strip like $U_\delta$ is not strong enough to provide the necessary global topological information: If one can squeeze surfaces to points with a deformation of finite elastic energy (possible if $p$ and $r$ are small enough), then self-penetration on $U_\delta$ is indeed possible for a $y\in W_+^{1,p}$ which is injective on $U_\delta$ outside a set of dimension $d-1$. Such a set of $d$-dimensional measure zero is invisible to \eqref{eq:CNC}. 
\end{remark}

\begin{proof}[Proof of Theorem~\ref{thm:main2}]
\emph{Lower bound ($\mathit\Gamma$-lim\,inf-inequality):} 
This is completely analogous to the proof of Theorem~\ref{thm:main}, using 
Proposition~\ref{prop:CNC2} instead of Proposition~\ref{prop:CNC}.
 
\emph{Upper bound (construction of a recovery sequence):} 
Again, we can follow the proof of Theorem~\ref{thm:main} step by step, using the domain shrinking maps $\Psi_j$ to define $y_j:=y\circ \Psi_j$ as before. In particular, 
changing variables we now observe that
\[
	\Dcal_{\delta}(y_j)=\Dcal_{\Psi_j(U_\delta)}(y)
	\leq \Dcal_{\Omega_j}(y) <\infty
\]
by Proposition~\ref{prop:distor},
for any fixed $j$. Given $(\eps_k,\delta_k)\to (0,\delta_0)$, 
we thus again get a suitable recovery sequence given by $(y_{j(k)})$ as long as $j(k)\to\infty$ slow enough so that $\eps_k \Dcal_{\delta_k}(y_{j(k)})\to 0$. 
\end{proof}

\subsection{Surface self-repulsion}\label{ssec:mainsurface}

Here we look at $\Dcal:=\Dcalt_{\partial\Omega}$ where
\[ \Dcalt_{\partial\Omega}(y) = \iint_{\partial\Omega\times\partial\Omega}\frac{\abs{x-\tilde x}^{q}}{\abs{y(x)-y(\tilde x)}^{d-1
+sq}}
\d A(x)\d A(\tilde x), \qquad q\in[1,\infty), s\in[0,1), \]
and $A(\cdot)$ denotes the $(d-1)$-dimensional Hausdorff
measure. Again, this is a term well-suited for minimization via the direct method:

\begin{proposition}\label{prop:lscD3}
For $p>d$, the functional
$\Dcalt_{\partial\Omega}:W^{1,p}(\Omega;\R^d)\to [0,\infty]$ 
is
lower semicontinuous with respect to weak convergence in $W^{1,p}$.
\end{proposition}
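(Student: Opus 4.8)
The plan is to exploit the compact Sobolev embedding $W^{1,p}(\Omega;\R^{d})\hookrightarrow C(\overline\Omega;\R^{d})$, which is available since $p>d$: a sequence $y_k\rightharpoonup y$ weakly in $W^{1,p}$ then converges uniformly on $\overline\Omega$, hence, passing to traces, uniformly on $\partial\Omega$. Because $\Dcalt_{\partial\Omega}$ contains neither derivatives of $y$ nor a determinant weight, the argument is in fact simpler than that of Proposition~\ref{prop:lscD1}: no lower semicontinuity theorem for separately convex integrands is needed, since the truncated functionals introduced below will be shown to be \emph{continuous} along the sequence, and this already forces lower semicontinuity of $\Dcalt_{\partial\Omega}$ as a supremum.

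Concretely, for $\delta>0$ I would set
\[
	\Dcalt^{[\delta]}(y):=\iint_{\partial\Omega\times\partial\Omega}\frac{\abs{x-\tilde x}^{q}}{\max\{\delta,\abs{y(x)-y(\tilde x)}^{d-1+sq}\}}\d A(x)\d A(\tilde x)\le \Dcalt_{\partial\Omega}(y),
\]
and note that $\Dcalt_{\partial\Omega}(y)=\sup_{\delta>0}\Dcalt^{[\delta]}(y)$ by monotone convergence, since as $\delta\searrow0$ the integrand increases pointwise (off the $A\otimes A$-null diagonal) to that of $\Dcalt_{\partial\Omega}$. It therefore suffices to show that each $\Dcalt^{[\delta]}$ is weakly sequentially lower semicontinuous and then take the supremum over $\delta>0$.

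For fixed $\delta>0$, the integrand, regarded as $(x,\tilde x)\mapsto \abs{x-\tilde x}^{q}\big/\max\{\delta,\abs{z-\tilde z}^{d-1+sq}\}$ with $z=y(x)$, $\tilde z=y(\tilde x)$, is bounded by $(\operatorname{diam}\Omega)^{q}/\delta$ and depends continuously on $(z,\tilde z)$. Since $\partial\Omega$ is a compact Lipschitz hypersurface, $A(\partial\Omega)<\infty$, so $A\otimes A$ is a finite measure on $\partial\Omega\times\partial\Omega$. Given $y_k\rightharpoonup y$ in $W^{1,p}$, uniform convergence $y_k\to y$ on $\partial\Omega$ makes the integrand evaluated at $(x,\tilde x,y_k(x),y_k(\tilde x))$ converge uniformly on $\partial\Omega\times\partial\Omega$, hence in $L^{1}(\partial\Omega\times\partial\Omega)$, to the one with $y$; therefore $\Dcalt^{[\delta]}(y_k)\to\Dcalt^{[\delta]}(y)$. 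In particular $\liminf_k\Dcalt_{\partial\Omega}(y_k)\ge\liminf_k\Dcalt^{[\delta]}(y_k)=\Dcalt^{[\delta]}(y)$ for every $\delta>0$, and passing to the supremum over $\delta$ gives $\liminf_k\Dcalt_{\partial\Omega}(y_k)\ge\Dcalt_{\partial\Omega}(y)$, as required.

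I do not expect a substantial obstacle here; the only point deserving a word of care is that the "boundary values" used above are the traces of the continuous representatives of the $y_k$ and $y$, which is exactly what the compact embedding into $C(\overline\Omega;\R^{d})$ supplies when $p>d$. Unlike in the bulk case, no inner approximation of the domain is needed, since nothing beyond $A(\partial\Omega)<\infty$ is used about $\partial\Omega$.
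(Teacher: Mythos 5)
Your proof is correct, but it takes a longer route than the paper. The paper's proof is just two sentences: since the trace operator $W^{1,p}(\Omega;\R^d)\to L^{1}(\partial\Omega;\R^d)$ is compact, a weakly convergent sequence $y_k\rightharpoonup y$ yields (along a subsequence) pointwise $A$-a.e.\ convergence of the traces on $\partial\Omega$, hence $A\otimes A$-a.e.\ convergence of the nonnegative integrand of $\Dcalt_{\partial\Omega}$, and Fatou's lemma then gives lower semicontinuity directly. Your truncation-and-supremum scheme, patterned after the proof of Proposition~\ref{prop:lscD1}, is equally valid and in fact yields slightly more (each $\Dcalt^{[\delta]}$ is weak-to-strong \emph{continuous}, not merely lower semicontinuous), but the extra layer of truncation is unnecessary here: unlike the bulk functional, $\Dcalt_{\partial\Omega}$ carries no Jacobian weights, so there is no weakly convergent determinant factor to handle and no need for a separate-convexity result; pointwise convergence of the full (possibly $+\infty$-valued) integrand feeds into Fatou directly. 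Both arguments rely on the same single ingredient — compactness of the boundary trace for $p>d$ — so the difference is one of economy rather than substance.
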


\begin{proof}
 As the trace of $y\in W^{1,p}(\Omega,\R^{d})$ on $\partial\Omega$ is compactly embedded in~$L^{1}$,
 we obtain pointwise a.e.\ convergence of the integrand.
 So the claim immediately follows from Fatou's Lemma.
\end{proof}

We will employ results of~\cite{Kroe20a}
which require that $\Omega$ does not have ``holes''
as made precise in the following statement.

\begin{theorem}\label{thm:main3}
Let $\Omega\subset \R^d$ be a bounded Lipschitz domain
such that $\R^{d}\setminus\partial\Omega$ has exactly two connected components.
Given~\eqref{eq:dprs}, we require $q\geq 1$ and $s\in[0,1]$ to be chosen such that
\begin{equation}\label{eq:sigma3}
	q \br{(1-s)\sigma-d}>d^2-\sigma
\end{equation}
and
\begin{equation}\label{eq:self-rep}
 sq\ge (d-1)\frac{p+d}{p-d}.
\end{equation}
Then the functionals $E_{\eps}$ $\Gamma$-converge to $E_{0}$
 as $\eps\searrow 0$, with respect to the weak topology of $W^{1,p}(\Omega,\R^{d})$.
\end{theorem}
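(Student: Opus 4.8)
The plan is to mirror the structure of the proofs of Theorems~\ref{thm:main} and~\ref{thm:main2}: the $\Gamma$-$\liminf$ inequality follows once we know that finite energy forces \eqref{eq:CNC}, plus weak lower semicontinuity of $\Ecal$ (Proposition~\ref{prop:lscE}) and Lemma~\ref{lem:stable}; the recovery sequence is again built by domain shrinking via $\Psi_j$ from Lemma~\ref{lem:shrinking}. So the two genuinely new ingredients needed are (a) a ``finite $E_\eps(y)$ implies \eqref{eq:CNC}'' statement adapted to $\Dcalt_{\partial\Omega}$, and (b) control of $\Dcalt_{\partial\Omega}(y\circ\Psi_j)$ along the recovery sequence.

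For (a): if $y\in W^{1,p}_+(\Omega,\R^d)$ with $\Ecal(y)<\infty$, then by Proposition~\ref{prop:homeo} the continuous representative of $y$ is open and discrete. Condition \eqref{eq:self-rep} is exactly what is needed so that $\Dcalt_{\partial\Omega}(y)<\infty$ forces injectivity of the trace $y|_{\partial\Omega}$: indeed, if $y|_{\partial\Omega}$ identified two points, a Sobolev trace estimate (the trace of $W^{1,p}$ on $\partial\Omega$ lies in $W^{1-1/p,p}(\partial\Omega)\hookrightarrow C^{0,1-(d-1+1/p)/p}$... one should check the precise Morrey-on-the-boundary exponent here) would make the integrand non-integrable near the diagonal of that pair, using that the singularity exponent $d-1+sq$ exceeds the H\"older-scaled dimension count; this is the role of \eqref{eq:self-rep}. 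Once $y|_{\partial\Omega}$ is injective we are in the situation of Proposition~\ref{prop:CNC2}'s second half: $y\in\operatorname{AIB}(\Omega)$ in the sense of \cite{Kroe20a} (here using the hypothesis that $\R^d\setminus\partial\Omega$ has exactly two components), and \cite[Thm.~6.1 and Rem.~6.3]{Kroe20a} yields \eqref{eq:CNC} on $\Omega$. I would isolate this as a Proposition~\ref{prop:CNC3}. Note that, unlike the bulk cases, here one cannot pass to smooth interior subdomains, so the topological hypothesis on $\partial\Omega$ and Lipschitz regularity of $\partial\Omega$ must be used directly.

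For (b): along the shrinking sequence we need $\Dcalt_{\partial\Omega}(y\circ\Psi_j)<\infty$ for each fixed $j$. Since $\Psi_j$ is a fixed $C^\infty$-diffeomorphism of $\overline\Omega$ onto $\Psi_j(\overline\Omega)\subset\subset\Omega$, changing variables on $\partial\Omega$ shows $\Dcalt_{\partial\Omega}(y\circ\Psi_j)$ is comparable (with $j$-dependent constants) to a Sobolev--Slobodecki\u\i\ $W^{s,q}$-type seminorm of $(y|_{\Psi_j(\partial\Omega)})^{-1}$ restricted to the compact surface $y(\Psi_j(\partial\Omega))\subset\subset y(\Omega)$. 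By Proposition~\ref{prop:homeo}, $y^{-1}\in W^{1,\sigma}(y(\Omega),\Omega)$; restricting to a neighbourhood of the compact hypersurface $y(\Psi_j(\partial\Omega))$ and using the trace theorem from $W^{1,\sigma}$ of a $d$-dimensional neighbourhood down to $W^{1-1/\sigma,\sigma}$ on the $(d-1)$-dimensional surface, followed by the embedding $W^{1-1/\sigma,\sigma}\hookrightarrow W^{s,q}$ on that surface — this is precisely where condition \eqref{eq:sigma3} enters, as the dimensional/exponent inequality guaranteeing the embedding of the boundary trace space into the target Slobodecki\u\i\ space — gives finiteness. (When $s=1$ one uses $W^{1-1/\sigma,\sigma}\hookrightarrow W^{\tilde s,q}$ for $\tilde s<1$ close to $1$.) This should be stated as a surface analogue of Proposition~\ref{prop:distor}. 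With finiteness in hand, given $\eps_k\searrow0$ one picks $j(k)\to\infty$ slowly enough that $\eps_k\Dcalt_{\partial\Omega}(y\circ\Psi_{j(k)})\to0$, and $\Ecal(y\circ\Psi_{j(k)})\to\Ecal(y)$ by Lemma~\ref{lem:shrinkcont} exactly as in Theorem~\ref{thm:main} (including the treatment of the $(\det\nabla y)^{-r}$ term via the chain rule and $\det\nabla\Psi_j\to1$ uniformly).

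I expect the main obstacle to be the careful bookkeeping of Sobolev trace exponents on the hypersurface in both directions: verifying that \eqref{eq:self-rep} is exactly sharp for forcing trace injectivity in (a), and that \eqref{eq:sigma3} is exactly what makes the composed embedding ``$W^{1,\sigma}$ in the bulk $\to$ trace on surface $\to W^{s,q}$ on surface'' work in (b). The Lipschitz-only regularity of $\partial\Omega$ (so one must argue via a bi-Lipschitz localization/flattening rather than a smooth chart) and, for the recovery sequence, the fact that $\Psi_j(\partial\Omega)$ is a smooth deformation of $\partial\Omega$ sitting compactly inside $\Omega$ so that $y$ is a homeomorphism with $W^{1,\sigma}$ inverse on a full neighbourhood of it, are the places where care is needed, but these are of the same nature as arguments already appearing in the excerpt.
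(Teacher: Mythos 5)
Your high-level plan matches the paper's: lower bound via a surface analogue of Proposition~\ref{prop:CNC} combined with Lemma~\ref{lem:stable} and Proposition~\ref{prop:lscE}, recovery sequence via the domain-shrinking maps $\Psi_j$ and a surface analogue of Proposition~\ref{prop:distor}. Your item (a) is essentially right in spirit: the paper (Proposition~\ref{prop:CNC3}) reduces to injectivity of $y|_{\partial\Omega}$ via \cite[Cor.~6.5]{Kroe20a} and derives the divergence of the integral from the H\"older estimate on $y$. However, it uses the \emph{bulk} Morrey embedding $W^{1,p}(\Omega)\hookrightarrow C^{0,1-d/p}(\overline\Omega)$, not a trace-then-Morrey-on-$\partial\Omega$ chain; the exponent you wrote, $1-(d-1+1/p)/p$, is not correct (both routes actually give $\alpha=1-d/p$), but this is cosmetic. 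The Lipschitz localization by bi-Lipschitz charts is exactly how the paper handles the $\partial\Omega$ geometry.

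Item (b) is where your proposal has a genuine gap, and the paper takes a different, cleaner route. You claim that changing variables makes $\Dcalt_{\partial\Omega}(y\circ\Psi_j)$ \emph{comparable} to the $W^{s,q}$ Sobolev--Slobodecki\u\i\ seminorm of $(y|_{\Psi_j(\partial\Omega)})^{-1}$ on the image surface $y(\Psi_j(\partial\Omega))$, and then invoke a trace theorem $W^{1,\sigma}\to W^{1-1/\sigma,\sigma}$ on that surface followed by $W^{1-1/\sigma,\sigma}\hookrightarrow W^{s,q}$. This fails for two reasons. First, unlike $\Dcal_U$, the functional $\Dcalt$ carries no Jacobian weight, so the pullback to the image requires dividing by the $(d-1)$-dimensional surface Jacobian of $y|_{\partial\Omega_j}$; for $y\in W^{1,p}_+$ with merely $\Ecal(y)<\infty$, this surface Jacobian has no lower bound (the energy only controls $(\det\nabla y)^{-r}$ in $L^1$), so the claimed comparability is unjustified. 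Second, the trace theorem onto $y(\partial\Omega_j)$ would require regularity of that hypersurface; it is only the continuous image of a Lipschitz surface under a Sobolev homeomorphism, so neither Lipschitz nor $C^1$ regularity is available there. Finally, even ignoring those obstructions, the embedding condition one would read off from $W^{1-1/\sigma,\sigma}\hookrightarrow W^{s,q}$ on a $(d-1)$-manifold (namely $q((1-s)\sigma-d)\ge -(d-1)\sigma$) does not coincide with \eqref{eq:sigma3}.

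The paper's Proposition~\ref{prop:distor3} avoids all of this by never changing variables to the image: it uses $y^{-1}\in C^{0,\beta}$ with $\beta=1-d/\sigma$ (bulk Morrey on a smooth domain $\Lambda$ with $y(\Omega')\subset\subset\Lambda\subset\subset y(\Omega)$) to get the pointwise lower bound $\abs{y(x)-y(\tilde x)}\ge c\abs{x-\tilde x}^{1/\beta}$, and then simply integrates $\abs{x-\tilde x}^{q-t/\beta}$ over $\partial\Omega'\times\partial\Omega'$ via bi-Lipschitz charts on the \emph{reference} surface; the resulting integrability threshold is exactly \eqref{eq:sigma3}. Correspondingly, in the recovery sequence the change of variables is only the benign one induced by the diffeomorphism $\Psi_j$, yielding $\Dcalt_{\partial\Omega}(y_j)\leq C_j\,\Dcalt_{\partial\Omega_j}(y)$, and then Proposition~\ref{prop:distor3} is applied with $\Omega'=\Omega_j$. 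You should replace your trace-theorem argument by this direct pointwise H\"older estimate of $y^{-1}$; it sidesteps the surface Jacobian and the regularity of $y(\partial\Omega_j)$ entirely.
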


\begin{remark}
 With $\sigma=\sigma(r,p,d)>d$ as defined in \eqref{eq:dprs}, the conditions~\eqref{eq:sigma3} and~\eqref{eq:self-rep}
 are met if  $0< s<1-\frac d\sigma$ and $q>\max\left\{\frac{d^2-\sigma}{(1-s)\sigma-d},\frac{d-1}s\cdot\frac{p+d}{p-d}\right\}$.
\end{remark}

For the proof, we additionally need the following two propositions.

\begin{proposition}[Finite $E_\eps(y)$ implies \eqref{eq:CNC}]\label{prop:CNC3}
Let $\Omega\subset \R^d$ be a bounded Lipschitz domain
such that $\R^{d}\setminus\partial\Omega$ has exactly two connected components.
Suppose that~\eqref{eq:dprs} and~\eqref{eq:self-rep} hold.
Moreover, let $y\in W_{+}^{1,p}(\Omega,\R^{d})$ such that $\Ecal(y)<\infty$ and $\Dcalt_{\partial\Omega}(y)<\infty$. 
Then $y$ satisfies~\eqref{eq:CNC}.
\end{proposition}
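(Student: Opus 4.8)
The plan is to mimic the indirect argument of Proposition~\ref{prop:CNC}, but now extract the needed contradiction from the boundary term $\Dcalt_{\partial\Omega}$ rather than the bulk term. First I would observe that by Proposition~\ref{prop:homeo} (which applies because \eqref{eq:dprs} holds), the continuous representative of $y$ is open and discrete on $\Omega$, so $y(\Omega)$ is open. The extra hypothesis \eqref{eq:self-rep} is there precisely to ensure that the trace $y|_{\partial\Omega}$ inherits enough regularity and injectivity-control: the point is that $sq$ large enough forces $y|_{\partial\Omega}\in W^{t,q}(\partial\Omega,\R^d)$ for some $t$ with $tq>d-1$, hence (by Morrey-type embedding on the $(d-1)$-manifold $\partial\Omega$) the trace is continuous, and moreover finiteness of $\Dcalt_{\partial\Omega}(y)$ means $y|_{\partial\Omega}$ cannot identify two points of $\partial\Omega$ whose images coincide while keeping the singular kernel integrable. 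So the first real step is: $\Dcalt_{\partial\Omega}(y)<\infty$ implies $y|_{\partial\Omega}$ is injective, i.e.\ $x\mapsto y(x)$ maps $\partial\Omega$ injectively into $\R^d$.

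Next I would convert this boundary injectivity into global topological data. Since $\R^d\setminus\partial\Omega$ has exactly two connected components and $y|_{\partial\Omega}$ is injective, $y$ restricted to $\partial\Omega$ is a homeomorphism onto its image, and together with $\det\nabla y>0$ a.e.\ and $\Ecal(y)<\infty$ this puts $y$ into the class $\operatorname{AIB}(\Omega)$ of \cite[Def.~2.1, 2.2]{Kroe20a} (approximately injective on the boundary — here literally injective on the boundary). This is exactly the setup used in the proof of Proposition~\ref{prop:CNC2}. Applying \cite[Thm.~6.1 and Rem.~6.3]{Kroe20a} then yields directly that $y$ satisfies \eqref{eq:CNC} on $\Omega$, which is the assertion.

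The main obstacle is the first step, namely rigorously deducing injectivity of the trace from finiteness of $\Dcalt_{\partial\Omega}$, and checking that this is compatible with the quantitative bound \eqref{eq:self-rep}. The subtlety is that $\partial\Omega$ is only Lipschitz, so $(d-1)$-dimensional embedding/trace theorems must be used on a merely Lipschitz manifold; one should cover $\partial\Omega$ by finitely many Lipschitz graph charts and argue locally, exactly as in the proofs of Lemma~\ref{lem:shrinking} and Proposition~\ref{prop:CNC2}. Concretely: by the trace theorem the restriction of $y\in W^{1,p}(\Omega,\R^d)$ to $\partial\Omega$ lies in $W^{1-1/p,p}(\partial\Omega,\R^d)$, hence also in $W^{s',q}(\partial\Omega,\R^d)$ for appropriate $s',q$; one then shows that if $y|_{\partial\Omega}$ were not injective — say $y(x_0)=y(\tilde x_0)$ for distinct $x_0,\tilde x_0\in\partial\Omega$ — then by continuity of the trace (guaranteed once $sq>d-1$ via \eqref{eq:self-rep}, since $(d-1)\frac{p+d}{p-d}\ge d-1$, with strictness enough) one obtains small disjoint relatively-open neighborhoods $V,W\subset\partial\Omega$ of $x_0,\tilde x_0$ whose images overlap, and then a change of variables / direct estimate on $V\times W$ shows the integrand $\abs{x-\tilde x}^q\abs{y(x)-y(\tilde x)}^{-(d-1+sq)}$ is non-integrable there, because the inner surface integral over $y(W)\subset y(V)$ of $\abs{\xi-\tilde\xi}^{-(d-1+sq)}$ diverges (the exponent exceeds the dimension $d-1$ by $sq\ge 0$, with an interior singularity), contradicting $\Dcalt_{\partial\Omega}(y)<\infty$. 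The bookkeeping needed to make the local-chart change of variables legitimate (bi-Lipschitz equivalence of $\partial\Omega\cap Q$ with a flat disk, comparability of the Hausdorff measure $A$ with Lebesgue measure in the chart) is routine but is the place where care is required.
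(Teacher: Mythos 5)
Your high-level skeleton is right: show that $\Dcalt_{\partial\Omega}(y)<\infty$ forces $y|_{\partial\Omega}$ to be injective and then invoke \cite{Kroe20a} to upgrade boundary injectivity to \eqref{eq:CNC}. The paper in fact uses \cite[Cor.~6.5]{Kroe20a} at this second step, but your route through AIB and \cite[Thm.~6.1, Rem.~6.3]{Kroe20a} would serve equally well. The genuine gap is in your first step, the deduction of boundary injectivity.

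You claim the contradiction comes from changing variables so that the inner integral becomes a surface integral over $y(W)\subset y(V)$ of $\abs{\xi-\tilde\xi}^{-(d-1+sq)}$, which diverges because the exponent exceeds $d-1$ by $sq\ge0$. This transposes the argument of Proposition~\ref{prop:CNC} to the boundary, but it does not carry over. There the integrand of $\Dcal_\Omega$ carries the factors $\abs{\det\nabla y(x)}\abs{\det\nabla y(\tilde x)}$, which are exactly the Jacobians needed for the change of variables, and $y$ is an open map so the image sets are open in $\R^d$. The surface functional $\Dcalt_{\partial\Omega}$ carries no Jacobian at all, and the trace $y|_{\partial\Omega}$ is merely H\"older continuous; its image need not be a rectifiable $(d-1)$-set, overlapping ``image neighborhoods'' of positive $(d-1)$-measure need not exist, and there is no surface measure on $y(\partial\Omega)$ against which $\abs{\xi-\tilde\xi}^{-(d-1+sq)}$ could be integrated. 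So the proposed change of variables is not available, and the resulting criterion ``$sq\ge 0$'' is the wrong threshold.

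What actually produces the divergence, and what \eqref{eq:self-rep} is calibrated for, is a direct estimate in the \emph{source} variables using only the H\"older continuity $y\in C^{0,\alpha}(\overline\Omega;\R^d)$ with $\alpha=1-d/p$ (this already gives continuity of the trace; no fractional trace theorem is needed). If $y(x_0)=y(\tilde x_0)$ with $x_0\neq\tilde x_0$, then near $(x_0,\tilde x_0)$ the numerator $\abs{x-\tilde x}^q$ is bounded below by a positive constant, while the denominator satisfies $\abs{y(x)-y(\tilde x)}\le \abs{y(x)-y(x_0)}+\abs{y(\tilde x_0)-y(\tilde x)}\le C(\abs{x-x_0}^\alpha+\abs{\tilde x-\tilde x_0}^\alpha)$. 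Flattening both boundary pieces with bi-Lipschitz charts and passing to polar coordinates in $\R^{d-1}\times\R^{d-1}\cong\R^{2d-2}$ reduces the lower bound to $\int_0^\delta \rho^{2d-3}\rho^{-\alpha t}\,d\rho$ with $t=d-1+sq$, which diverges iff $\alpha t\ge 2(d-1)$, i.e.\ iff $sq\ge(d-1)\frac{p+d}{p-d}$, which is exactly \eqref{eq:self-rep}. The point you miss is that the sub-unit H\"older exponent $\alpha<1$ weakens the singularity, so one needs a much stronger lower bound on $sq$ than $sq\ge0$; this is precisely why \eqref{eq:self-rep} appears in the hypotheses.
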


\begin{proof}
 According to~\cite[Cor.~6.5]{Kroe20a}
 it is enough to show injectivity of $y|_{\partial\Omega}$.
 If the latter is not the case,
 we may choose $x_{0},\tilde x_{0}\in\partial\Omega$, $x_{0}\ne\tilde x_{0}$,
 such that $y(x_{0})=y(\tilde x_{0})$.
 Recalling that $y\in C^{0,\alpha}(\overline\Omega,\R^{d})$,
 $\alpha=1-\frac dp$,
 and abbreviating $\eps=\tfrac13\abs{x_{0}-\tilde x_{0}}$, $t=d-1+sq$, we infer
 \begin{align*}
  \Dcalt_{\partial\Omega}(y)
  &=\iint_{\partial\Omega\times\partial\Omega}\frac{\abs{x-\tilde x}^{q}}{\abs{y(x)-y(\tilde x)}^{t}} \d A(x)\d A(\tilde x) \\
  &\ge\iint_{(\partial\Omega\cap B_{\eps}(x_{0}))\times(\partial\Omega\cap B_{\eps}(\tilde x_{0}))}\frac{\abs{x-\tilde x}^{q}}{\br{\abs{y(x)-y(x_{0})}+\abs{y(\tilde x_{0})-y(\tilde x)}}^{t}} \d A(x)\d A(\tilde x) \\
  &\ge c_{\alpha}\iint_{(\partial\Omega\cap B_{\eps}(x_{0}))\times(\partial\Omega\cap B_{\eps}(\tilde x_{0}))}\frac{\br{\frac\eps3}^{q}}{\br{\abs{x-x_{0}}^{\alpha}+\abs{\tilde x_{0}-\tilde x}^{\alpha}}^{t}} \d A(x)\d A(\tilde x).
 \end{align*}
 Introducing local bi-Lipschitz charts $\Phi:V\to\partial\Omega\cap B_{\eps}(x_{0})$, $\tilde\Phi:\tilde V\to\partial\Omega\cap B_{\eps}(\tilde x_{0}))$
 where $V,\tilde V\subset\R^{d-1}$ are open sets
 and $\Phi(0)=x_{0}$, $\tilde\Phi(0)=\tilde x_{0}$,
 we arrive at
 \begin{align*}
  \Dcalt_{\partial\Omega}(y)
  &\ge c_{\alpha,\eps,q,t}\iint_{V\times\tilde V}\frac{\sqrt{\det D\Phi(\xi)^{\top}D\Phi(\xi)}\sqrt{\det D\tilde\Phi(\tilde\xi)^{\top}D\tilde\Phi(\tilde\xi)}}{\br{\abs{\Phi(\xi)-\Phi(0)}^{2}+\abs{\tilde\Phi(\tilde\xi)-\tilde\Phi(0)}^{2}}^{\alpha t/2}} \d\xi\d\tilde\xi \\
  &\ge c_{\alpha,\eps,q,t,\Phi}\iint_{V\times\tilde V}\frac{\d\xi\d\tilde\xi}{\br{\abs{\xi}^{2}+\abs{\tilde\xi}^{2}}^{\alpha t/2}}.
 \end{align*}
 By assumption, both $V$ and $\tilde V$ contain $B_{\delta}(0)\subset\R^{d-1}$
 for some $\delta>0$.
 Decomposing $(\xi^{\top},\tilde\xi^{\top})=\rho\eta^{\top}\in\R^{2d-2}$
 where $\rho>0$, $\eta\in\mathbb S^{2d-3}$, yields
 \begin{align*}
  \Dcalt_{\partial\Omega}(y)
  &\ge c_{\alpha,\eps,q,t,\Phi,d}\int_{0}^{\delta}\frac{\rho^{2d-3}}{\rho^{\alpha t}}\d\rho.
 \end{align*}
 This term is infinite provided $2d-3-\alpha t\le-1$
 which is equivalent to~\eqref{eq:self-rep}.
\end{proof}

\begin{proposition}\label{prop:distor3}
 Let $\Omega\subset \R^d$ be a bounded Lipschitz domain.
 Assume that $y\in W_{+}^{1,p}(\Omega,\R^{d})$ is a homeomorphism
 $\Omega\to y(\Omega)$
 with $y^{-1}\in W^{1,\sigma}(y(\Omega),\Omega)$
 for which~\eqref{eq:sigma3} applies.
 If $\Omega'$ is open and $\Omega'\subset\subset\Omega$
 and $\partial\Omega'$ is Lipschitz,
 then $\Dcalt_{\partial\Omega'}(y)<\infty$.
\end{proposition}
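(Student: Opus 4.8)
The plan is to reduce to finiteness of $\Dcalt_{\partial\Omega'}(y)$ and to obtain it from an upper bound on the image‑side measure $\nu:=(y|_{\partial\Omega'})_{\#}\big(\mathcal H^{d-1}|_{\partial\Omega'}\big)$ which is sharper than the one coming from the plain Hölder continuity of $y^{-1}$; the improvement is driven by $\nabla y^{-1}\in L^{\sigma}$ with $\sigma>d$.

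Since $\overline{y(\Omega')}$ is a compact subset of the open set $y(\Omega)$, Morrey's inequality applies to $y^{-1}\in W^{1,\sigma}(y(\Omega),\Omega)$ in the quantitative form $\abs{y^{-1}(\xi)-y^{-1}(\tilde\xi)}\le C\abs{\xi-\tilde\xi}^{\beta}\,G(\xi,\abs{\xi-\tilde\xi})^{1/\sigma}$ on $\overline{y(\Omega')}$, where $\beta:=1-\tfrac d\sigma\in(0,1)$ and $G(\xi,\rho):=\int_{B(\xi,2\rho)}\abs{\nabla y^{-1}}^{\sigma}$ (the far‑away regime $\abs{\xi-\tilde\xi}\gtrsim1$ being harmless since the integrand of $\Dcalt_{\partial\Omega'}$ is then bounded). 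Putting $\xi=y(x),\ \tilde\xi=y(\tilde x)$, $\rho=\abs{y(x)-y(\tilde x)}$, this gives
\[
\frac{\abs{x-\tilde x}^{q}}{\abs{y(x)-y(\tilde x)}^{d-1+sq}}\le C^{q}\,\rho^{-\lambda}\,G\big(y(x),\rho\big)^{q/\sigma},\qquad \lambda:=d-1+sq-q\beta,
\]
and, applied to all of $B(\xi,\rho)$, it also yields $y^{-1}(B(\xi,\rho))\subset B(y^{-1}(\xi),C\rho^{\beta}G(\xi,\rho)^{1/\sigma})$, hence, since $\partial\Omega'$ is a Lipschitz $(d-1)$‑surface,
\[
\nu(B(\xi,\rho))=\mathcal H^{d-1}\big(\partial\Omega'\cap y^{-1}(B(\xi,\rho))\big)\le C\,\rho^{\beta(d-1)}\,G(\xi,\rho)^{(d-1)/\sigma}.
\]
If $\lambda\le0$ the first estimate already gives $\Dcalt_{\partial\Omega'}(y)<\infty$; in general one checks that \eqref{eq:sigma3} is precisely the inequality $\lambda<d\beta$.

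It remains to show $\iint_{\partial\Omega'\times\partial\Omega'}\rho^{-\lambda}G(y(x),\rho)^{q/\sigma}\,\d A(x)\,\d A(\tilde x)<\infty$ for $0<\lambda<d\beta$. I would decompose dyadically in $\rho\approx2^{-m}$, rewrite the $x$‑integral as an integral against $\nu$, and bound the $m$‑th piece, $\int G(\zeta,2^{-m})^{q/\sigma}\,\nu\big(B(\zeta,C2^{-m})\big)\,\d\nu(\zeta)$, using the estimate for $\nu(B(\zeta,\cdot))$ above together with Fubini for the factor $\int G(\zeta,2^{-m})^{q/\sigma}\,\d\nu(\zeta)$ — the point being that $\int G(\zeta,\rho)\,\d\nu(\zeta)=\int\abs{\nabla y^{-1}(\omega)}^{\sigma}\nu(B(\omega,2\rho))\,\d\omega\le C\rho^{\beta(d-1)}\norm{\nabla y^{-1}}_{L^{\sigma}(y(\Omega))}^{\sigma}$, so the ``bad'' pairs, on which the plain Hölder bound is nearly attained, carry little total mass. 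Summing the geometric series in $m$ yields convergence for $\lambda$ in a range strictly larger than the crude one $\lambda<\beta(d-1)$.

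The main obstacle is quantitative. A single localization step only enlarges the admissible range to something like $\lambda<\beta(d-1)(1+q/\sigma)$, which in general is still short of $\lambda<d\beta$; one has to iterate — bootstrapping the bound on $\nu(B(\zeta,\rho))$, equivalently on the averaged quantities $\int G(\zeta,\rho)^{\tau}\,\d\nu$ — until the admissible range closes up to exactly $d\beta$, i.e.\ to \eqref{eq:sigma3}. Making this iteration reach the sharp exponent, rather than a strictly stronger hypothesis, is the delicate part, and is where the precise shape of \eqref{eq:sigma3} (as opposed, say, to the cruder sufficient condition $q((1-s)\sigma-d)>d^{2}-d$) must enter. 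An alternative would be to transfer the surface integral to a bulk double integral over a tube $N$ with $\partial\Omega'\subset\subset N\subset\subset\Omega$ by a trace inequality (after truncating the diagonal singularity by $\max\{\delta,\cdot\}$ and letting $\delta\searrow0$), change variables $\xi=y(x)$ via Lemma~\ref{lem:transform} using $\abs{\nabla y(x)}\,\d x\le c\abs{\nabla y^{-1}(\xi)}^{d-1}\,\d\xi$, and finish by Hardy--Littlewood--Sobolev with $\nabla y^{-1}\in L^{\sigma}$; but the trace step raises the denominator exponent, so this route taken naively requires $\lambda<d-2$ and must anyway be coupled with the localized Morrey estimate in the range $\sigma>d^{2}/2$.
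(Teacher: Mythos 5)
The paper's proof is far simpler than what you propose, and it is exactly the ``plain H\"older'' route that you argue is insufficient: using $y^{-1}\in W^{1,\sigma}(y(\Omega),\Omega)$ with $\sigma>d$, one has $y^{-1}\in C^{0,\beta}$ with $\beta=1-d/\sigma$ near $y(\overline{\Omega'})$, hence $\abs{x-\tilde x}\leq C\abs{y(x)-y(\tilde x)}^{\beta}$, so that the integrand is bounded by $C\abs{x-\tilde x}^{q-t/\beta}$ with $t=d-1+sq$. One then pulls back to bi-Lipschitz charts $V\subset\R^{d-1}$ and does a radial integral. No pushforward measure, no localized Morrey refinement, no dyadic bootstrapping. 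So your proposal is a genuinely different, much more elaborate route, and as written it has a gap: the iteration you describe as ``the delicate part'' is sketched but not carried out, and you yourself flag that it is not clear the bootstrap closes up to $\lambda<d\beta$ rather than stalling strictly short of it. Without that step, the proposal does not prove the statement.

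That said, your diagnosis of the exponent is correct, and it actually uncovers a slip in the paper's computation. The final integral lives over a neighborhood of the diagonal in $V\times V\subset\R^{d-1}\times\R^{d-1}$, so the radial factor from polar coordinates in $\R^{d-1}$ is $\rho^{(d-1)-1}=\rho^{d-2}$, not $\rho^{d-1}$; the correct convergence condition is therefore $d-2+q-t/\beta>-1$, i.e.\ $\lambda<(d-1)\beta$, equivalently $q\bigl((1-s)\sigma-d\bigr)>d^2-d$, which is \emph{stronger} than \eqref{eq:sigma3} (the difference being exactly $\beta$, as you note). The paper writes $\rho^{d-1+q-t/\beta}$ and deduces $d-1+q-t/\beta>-1$, which is where \eqref{eq:sigma3} comes from, so its stated condition is off by one relative to what its own argument actually yields. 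In short: if one is content with the (harmless) strengthening $q((1-s)\sigma-d)>d^2-d$, the paper's simple H\"older argument suffices and is the intended proof; if one insists on \eqref{eq:sigma3} exactly, some genuine refinement along the lines you sketch would be needed, but you would have to actually close the iteration to make that a proof.
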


\begin{proof}
First notice that $y(\Omega)$ is open and bounded in $\R^{d}$, the former by invariance of domain (see \cite[Theorem 3.30]{FoGa95B}, e.g.) and the latter due to the fact that $y\in C(\overline{\Omega};\R^{d})$ by embedding.
Hence, $y(\overline{\Omega'})$ is a compact and connected subset of $y(\Omega)$ with positive distance to $\partial [y(\Omega)]$.
We choose a domain $\Lambda\subset \R^{d}$ with smooth boundary such that $y(\Omega')\subset\subset \Lambda \subset\subset y(\Omega)$.
By embedding, $y^{-1}\in C^{0,\beta}(\Lambda,\R^{d})$, $\beta=1-\frac d\sigma$.
Abbreviating $t=d-1+sq$, we arrive at
 \begin{align*}
  \Dcalt_{\partial\Omega'}(y)
  &=\iint_{\partial\Omega'\times\partial\Omega'}\frac{\abs{x-\tilde x}^{q}}{\abs{y(x)-y(\tilde x)}^{t}} \d A(x)\d A(\tilde x) \\
  &\le C_{\beta}\iint_{\partial\Omega'\times\partial\Omega'}\abs{x-\tilde x}^{q-t/\beta} \d A(x)\d A(\tilde x).
 \end{align*}
 The term $\abs{x-\tilde x}$ is bounded above since $\Omega$ is bounded.
 It approaches zero only in a neighborhood of the diagonal.
 In order to show that $\Dcalt_{\partial\Omega'}(y)$ is finite
 we only have to consider
 $\iint_{\Phi(V)\times\Phi(V)}\abs{x-\tilde x}^{q-t/\beta} \d A(x)\d A(\tilde x)$
 where $\Phi:V\to U\subset\partial\Omega'$ is a chart
 and $V\subset B_{R}(0)\subset\R^{d-1}$ is an open set.
 Decomposing $\xi=\rho\eta\in\R^{d-1}$
 where $\rho>0$, $\eta\in\mathbb S^{d-2}$, yields
 \begin{align*}
 &\iint_{\Phi(V)\times\Phi(V)}\abs{x-\tilde x}^{q-t/\beta} \d A(x)\d A(\tilde x) \\
 &=\iint_{V\times V}\abs{\Phi(\xi)-\Phi(\tilde\xi)}^{q-t/\beta}
 {\sqrt{\det D\Phi(\xi)^{\top}D\Phi(\xi)}\sqrt{\det D\Phi(\tilde\xi)^{\top}D\Phi(\tilde\xi)}}
 \d\xi\d\tilde\xi \\
 &\le C_{\Phi}\iint_{V\times V}\abs{\xi-\tilde\xi}^{q-t/\beta}
 \d\xi\d\tilde\xi \\
 &\le C_{\Phi}\int_{B_{R}(0)}\int_{B_{R}(0)}\abs{\xi-\tilde\xi}^{q-t/\beta}
 \d\xi\d\tilde\xi \\
 &\le C_{\Phi}\int_{B_{3R}(0)}\int_{B_{R}(\tilde\xi)}\abs{\xi}^{q-t/\beta}
 \d\xi\d\tilde\xi \\
 &\le C_{\Phi,d,R}\int_{0}^{R}\rho^{d-1+q-t/\beta}\d\rho.
 \end{align*}
 The right-hand side is finite if $d-1+q-t/\beta>-1$ 
 which is equivalent to~\eqref{eq:sigma3}.
\end{proof}

\begin{proof}[Proof of Theorem~\ref{thm:main3}]
We proceed as in the proof of Theorem~\ref{thm:main}.
For the \emph{lower bound} we use
Proposition~\ref{prop:CNC3} in place of
Proposition~\ref{prop:CNC}.
To see that the \emph{recovery sequence} also works for $\Dcalt_{\partial\Omega}$,
we compute
 \begin{align*}
  \Dcalt_{\partial\Omega}(y_{j})
  &=\Dcalt_{\partial\Omega}(y|_{\Omega_{j}}\circ \Psi_j) \\
  &=\iint_{\partial\Omega\times\partial\Omega}\frac{\abs{\xi-\tilde\xi}^{q}}{\abs{y(\Psi_{j}(\xi))-y(\Psi_{j}(\tilde\xi))}^{d-1+sq}} \d A(\xi)\d A(\tilde\xi) \\
  &\le C_{\Psi_{j}}\iint_{\partial\Omega_j\times\partial\Omega_j}\frac{\abs{\Psi_{j}^{-1}(x)-\Psi_{j}^{-1}(\tilde x)}^{q}}{\abs{y(x)-y(\tilde x)}^{d-1+sq}} \d A(x)\d A(\tilde x) \\
  &\le C_{\Psi_{j}}\norm{\nabla\Psi_{j}^{-1}}_{L^{\infty}}^{q}\iint_{\partial\Omega_j\times\partial\Omega_j}\frac{\abs{x-\tilde x}^{q}}{\abs{y(x)-y(\tilde x)}^{d-1+sq}} \d A(x)\d A(\tilde x)
 \end{align*}
where $C_{\Psi_{j}}$ denotes a factor that bounds the
terms arising from the change of variables.
Now we deduce from Proposition~\ref{prop:distor3}
(instead of Proposition~\ref{prop:distor})
that the right-hand side is finite.
\end{proof}

\subsection{Further generalizations and remarks}\label{ssec:general}

\begin{remark}[More general elastic energies]
It is easy to see that throughout, the integrand of $\Ecal$ can be replaced by any polyconvex function admitting the original integrand as a lower bound (up to multiplicative and additive constants). Moreover, the latter is only exploited for the application of the theory of functions of bounded distortion in Proposition~\ref{prop:homeo}. More precisely, Theorems~\ref{thm:main}, \ref{thm:main2} and \ref{thm:main3} also hold for any elastic energy of the form
\[ \Ecal(y) = \int_{\Omega} W(\nabla y(x))\,\d x \]
such that
\begin{enumerate}
\item[(i)] $W:\R^{d\times d}\to (-\infty,+\infty]$ is continuous and polyconvex,
\item[(ii)] $W(F)\geq c |F|^p-C$ for all $F\in \R^{d\times d}$, where $p>d$,
\item[(iii)] $W(F)\geq c \left(\frac{|F|^{d}}{\det F}\right)^\beta-C$ 
for all $F\in \R^{d\times d}$ with $\det F>0$, where $\beta>d-1$.
\end{enumerate}
Here, $p>d,\beta>d-1$, $c>0$, and $C\in\R$ are constants. Notice that (iii) directly provides the bound on 
the outer distortion we need to generalize Proposition~\ref{prop:homeo}.
\end{remark}

\begin{remark}[Boundary conditions and force terms]
Due to the stability of $\Gamma$-convergence with respect to addition of continuous functionals, our main results continue to hold
if $\Ecal$ is modified by adding a term which is continuous in the weak topology of $W^{1,p}$ 
(typically either linear or lower order, exploiting a compact embedding). This includes many classical force potentials for body forces and surface tractions. 
Additional boundary conditions, say, a Dirichlet condition of the form $y=y_0$ on a part $\Lambda$ of $\partial\Omega$,
are in principle also possible but not trivial to add, as they require modified recovery sequences in the proof of the theorems. In particular, we would need a suitable modification of Lemma~\ref{lem:shrinking} which keeps the Dirichlet part of the boundary fixed, as well as additional assumptions on $y_0$ which at the very least should map $\overline{\Lambda}$ to a reasonably smooth set out of self-contact. The easiest way to set up a meaningful model with full coercivity in $W^{1,p}$ which is compatible with our theorems is to confine the deformed material to a box by constraint ($y(\Omega)\subset \mathcal{B}$ for a given compact $\mathcal{B}\subset \R^d$ with non-empty interior).
\end{remark}

\begin{remark}[More general nonlocal self-repulsive terms]
It is clear that our general proof strategy can also be applied to other nonlocal terms $\Dcal$. 
The only key features of such a term $\Dcal$ are the following:
\begin{itemize}
\item[(i)] for any deformation $y$ with finite elastic energy $\Ecal(y)$,
finite $\Dcal(y)$ implies \eqref{eq:CNC} (cf.~Propositions~\ref{prop:CNC}, \ref{prop:CNC2} and \ref{prop:CNC3});
\item[(ii)] for any homeormorphisms $y\in W^{1,p}_+$ ($p>d$) whose inverse has the Sobolev regularity $W^{1,\sigma}$ ($\sigma>d$) obtained from the control of its distortion through the elastic energy (see Proposition~\ref{prop:homeo}), we obtain
$\Dcal(y)<\infty$, at least if we move to a slightly smaller domain $\Omega'\subset\subset \Omega$
(cf.~Propositions~\ref{prop:distor} and \ref{prop:distor3}).
\end{itemize}
Moreover, following \cite{KroeVa19a,KroeVa22Pa}, it is in principle possible to work with an everywhere finite 
$\Dcal_\eps$ instead of $\eps \Dcal$ (say, a suitable truncation of the latter), if we restrict ourselves to deformations satisfying a fixed energy bound. 
Here, the basic idea is to find at least one deformation $y_0$ so that $e_0:=\Ecal(y_0)+\sup_{0<\eps\leq 1}\Dcal_\eps(y_0)<\infty$,
for instance the identity or another map far from self-contact.
Then check if (i) still holds in such a case if we replace the assumption $\Dcal(y)<\infty$ by $\Ecal(y)+\Dcal_\eps(y)\leq e_0$ (for sufficiently small $\eps$ independently of $y$).
\end{remark}

\begin{remark}[Mosco-covergence and recovery by homeomorphisms]
Our proofs of Theorems~\ref{thm:main}, \ref{thm:main2} and \ref{thm:main3} actually provide more than $\Gamma$-convergence: The recovery sequence we construct always converges strongly in $W^{1,p}$, which means that we actually proved so called Mosco-convergence. Moreover, as constructed, each member of the recovery sequence is a homeomorphism on $\overline{\Omega}$. 
In particular, any admissible $y$ with finite $E_0(y)$ is always contained in the $C^0$-closure of these homeomorphisms, i.e., $y\in AI(\overline{\Omega})$ in the notation of \cite{Kroe20a}. 
Our results here therefore also show that
within $W^{1,p}_+(\Omega;\R^d)$ with $p>d$, $AI(\overline{\Omega})$
coincides the class of maps satisfying $\eqref{eq:CNC}$ if we also impose strong enough a-priori bounds on the outer distortion to apply the result of Villamor and Manfredi as in Proposition~\ref{prop:homeo}.
The general case is still not clear, cf.~\cite[Remark 2.19]{Kroe20a}.
\end{remark}

\subsection*{Acknowledgements}The work of S.K.~was supported by the GA \v{C}R-FWF grant 19-29646L.
Major parts of this research were carried out during mutual research visits
of S.K. at the Chemnitz University of Technology and of 
Ph.~R. at \'{U}TIA, whose hospitality is gratefully acknowledged.

\end{document}